\def\id{{\rm \textsf{id}}}
\def\Fix{{\rm \textsf{fix}}}
\def\Aut{{\rm \textsf{Aut}}}
\begin{document}
\begin{sloppypar}

\newtheorem{theorem}{Theorem}[section]
\newtheorem{remark}[theorem]{Remark}
\newtheorem{problem}[theorem]{Problem}
\newtheorem{corollary}[theorem]{Corollary}
\newtheorem{definition}[theorem]{Definition}
\newtheorem{conjecture}[theorem]{Conjecture}
\newtheorem{question}[theorem]{Question}

\newtheorem{lemma}[theorem]{Lemma}
\newtheorem{proposition}[theorem]{Proposition}
\newtheorem{quest}[theorem]{Question}
\newtheorem{example}[theorem]{Example}
\newtheorem{observation}[theorem]{Observation}
\newcommand{\pp}{{\it p.}}
\newcommand{\de}{\em}

\title{  {Generalized Cayley graphs and perfect code}\thanks{This research was supported by  NSFC (No. 12071484), Hunan Provincial Natural Science Foundation (2020JJ4675).
 E-mail addresses: liaoqianfen@163.com(Q. Liao),  wjliu6210@126.com(W. Liu, corresponding author).}}

\author{Qianfen Liao$^1$, Weijun Liu$^{1,2\dagger}$\\
{\small $^{1}$ School of Mathematics and Statistics, Central South University} \\
{\small New Campus, Changsha, Hunan, 410083, P.R. China. }\\
{\small $^2$ College of General Education, Guangdong University of Science and Technology,} \\
{\small Dongguan, Guangdong, 523083, P.R. China}
}
\maketitle

\vspace{-0.5cm}

\begin{abstract}
Perfect code in Cayley graphs and Cayley sum graphs is studied extensively in recent years.
In this paper, we consider perfect code in generalized Cayley graphs.
 \end{abstract}

\section{Introduction}

Given a graph $\Gamma$,  a subset $C_1$ of $V(\Gamma)$  is called a perfect code in $\Gamma$ if $C_1$ is an independent set and every vertex in $(\Gamma)\setminus C_1$ is adjacent  to exactly one vertex in $C_1$.
A subset $C_2$ of $V(\Gamma)$ is said to be a total perfect code  in $\Gamma$ if every vertex of $\Gamma$ has exactly one neighbor in $C_2$.
(In particular, $C_2$ induces a matching in $\Gamma$ and so $|C_2|$ is even.)

Given a finite group $G$ and an  automorphism $\alpha$ of $G$ with $\alpha^2=\id$, define subsets $\omega_\alpha(G)=\{\alpha(g^{-1})g\mid g\in G\}$, $\Omega_\alpha(G)=\{g\in G\setminus \omega_\alpha(G) \mid  \alpha(g)=g^{-1}\}$ and $\mho_\alpha(G)=\{g\in G \mid \alpha(g)\neq g^{-1}\}$.
A subset $S$ of $G$ is called a generalzied Cayley subset induced by $\alpha$ if it satisfies $S\cap \omega_\alpha(G)=\emptyset$ and $\alpha(S)=S^{-1}$.
The graph with vertices $G$ and edges $\{\{g, h\}\mid \alpha(g^{-1})h\in S\}$ is called the generalized Cayley graph of $G$ with respect to the ordered pair $(S,\alpha)$, and denoted by $GC(G, S, \alpha)$.
In particular, if $\alpha=\id$, the corresponding graph is Cayley graph.
To differ from the Cayley graph, we only discuss the case of $\alpha$ is involution of $\Aut(G)$ throughout this paper.

We call a subgroup is a (total) perfect code of $G$ with respect  to $\alpha$ if $H$ is subgroup (total) perfect code of some generalized Cayley graph of $G$ induced by $\alpha$.

In this paper, we study perfect code in generalized Cayley graphs.
All groups appeared among this paper is finite.

\section{}

\begin{lemma}\label{lem1}
Let $S=\{s_1, \cdots, s_r\}$ be a generalized Cayley subset of $G$ induced by the involutory automorphism $\alpha$ and $X$ a subset of $G$.
The following conditions are equivalent:
\begin{enumerate}[{\rm(i)}]

\item each vertex in $GC(G, S, \alpha)$ is adjacent to at most one vertex in $X$;

\item for each two distinct element $s_1$ and $s_2$ in $S$, $\alpha(X)s_1\cap \alpha(X)s_2=\emptyset$;

\item $(\alpha(X^{-1})\alpha(X))\cap(SS^{-1})=\{e\}$.

\end{enumerate}
\end{lemma}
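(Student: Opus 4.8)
The plan is to reduce all three conditions to a single equation in $G$ and then read off each equivalence by elementary manipulation. The starting observation is a convenient description of adjacency. Since $\alpha$ is an automorphism with $\alpha^2=\id$ we have $\alpha(x^{-1})=\alpha(x)^{-1}$, and the edge relation $\alpha(g^{-1})h\in S$ is symmetric in $g,h$ because $\alpha(S)=S^{-1}$ (applying $\alpha$ and then inverting sends $\alpha(g^{-1})h$ to $\alpha(h^{-1})g$). Hence a vertex $g$ is adjacent to $x\in X$ if and only if $\alpha(x^{-1})g\in S$, equivalently $g\in\alpha(x)S$, equivalently $g=\alpha(x)s$ for some $s\in S$. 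This rewriting of ``adjacent'' as ``lies in a translate of $S$'' is the key that converts the graph statement (i) into the product statements (ii) and (iii).

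For (i) $\Leftrightarrow$ (ii) I would argue through the negations. A vertex $g$ is adjacent to two \emph{distinct} elements $x_1,x_2\in X$ exactly when $g=\alpha(x_1)s_1=\alpha(x_2)s_2$ for some $s_1,s_2\in S$, which says precisely that $g\in\alpha(X)s_1\cap\alpha(X)s_2$. The distinctness bookkeeping is where care is needed: whenever $\alpha(x_1)s_1=\alpha(x_2)s_2$, cancellation together with injectivity of $\alpha$ gives $x_1=x_2\Leftrightarrow s_1=s_2$. Therefore ``$g$ has at least two neighbours in $X$'' is equivalent to ``$\alpha(X)s_1\cap\alpha(X)s_2\neq\emptyset$ for some distinct $s_1,s_2\in S$'', which is exactly the failure of (ii). Negating gives (i) $\Leftrightarrow$ (ii).

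For (ii) $\Leftrightarrow$ (iii) I would rearrange the same equation: $\alpha(x_1)s_1=\alpha(x_2)s_2$ is equivalent to $\alpha(x_2)^{-1}\alpha(x_1)=s_2 s_1^{-1}$, and since $\alpha(x_2)^{-1}=\alpha(x_2^{-1})$ the left side lies in $\alpha(X^{-1})\alpha(X)$ while the right side lies in $SS^{-1}$. Thus a common value of the two products is the same thing as a solution of this equation. The point to nail down is the role of the identity: taking $x_1=x_2$ and $s_1=s_2$ shows $e$ always lies in $(\alpha(X^{-1})\alpha(X))\cap(SS^{-1})$ (assuming $X,S\neq\emptyset$), so (iii) asserts only that no \emph{other} common element exists; and a common value $s_2 s_1^{-1}=\alpha(x_2)^{-1}\alpha(x_1)$ differs from $e$ if and only if $s_1\neq s_2$. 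Hence (iii) fails iff there exist distinct $s_1,s_2\in S$ with $\alpha(X)s_1\cap\alpha(X)s_2\neq\emptyset$, i.e.\ iff (ii) fails.

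The only genuine obstacle is the careful matching of the distinctness conditions, namely $s_1\neq s_2 \Leftrightarrow x_1\neq x_2 \Leftrightarrow$ (common element $\neq e$); once adjacency is rewritten as $g=\alpha(x)s$, each implication is a one-line cancellation in $G$. I would write this up either as the cycle (i) $\Rightarrow$ (ii) $\Rightarrow$ (iii) $\Rightarrow$ (i), or as the two separate biconditionals above, since every single step is symmetric and reversible.
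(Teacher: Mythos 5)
Your proposal is correct and follows essentially the same route as the paper: both rewrite adjacency to $x\in X$ as membership in the translate $\alpha(x)S$, use the cancellation $\alpha(x_1)s_1=\alpha(x_2)s_2\Rightarrow(x_1=x_2\Leftrightarrow s_1=s_2)$ to match the distinctness conditions, and pass between (ii) and (iii) by rearranging that single equation into $\alpha(x_2^{-1})\alpha(x_1)=s_2s_1^{-1}$. The only difference is organizational (you argue two biconditionals via negations, the paper runs a cycle of contradictions), and your explicit remark that $e$ always lies in the intersection is a small point the paper leaves implicit.
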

\begin{proof}
\textbf{$(\rm i)\Rightarrow (\rm ii)$}:
Assume that $\alpha(X)s_1\cap \alpha(X)s_2\neq \emptyset$ for some different elements $s_1$ and $s_2$ in $S$.
Then there exists elements $x_1$ and $x_2$ in $X$ such that $\alpha(x_1)s_1=\alpha(x_2)s_2$.
It implies
\begin{equation*}
\alpha(\alpha(x_1)s_1)^{-1}x_2=\alpha(s_1^{-1})x_1^{-1}x_2=\alpha(s_2^{-1})\in S.
\end{equation*}
Thus, the vertex $\alpha(x_1)s_1$ is adjacent to $x_2$.
Observe that $\alpha(x_1)s_1$ is also adjacent to $x_1$ as $\alpha(x_1^{-1})\alpha(x_1)s_1=s_1\in S$.
From the assumption, we have $x_1=x_2$ and this yields that $s_1=s_2$, which is a contradiction.

\textbf{$(\rm ii)\Rightarrow (\rm iii)$}:
This is clear from the fact that, $\alpha(x_1)s_1=\alpha(x_2)s_2$ is equivalent to $\alpha(x_1^{-1})\alpha(x_2)=s_1s_2^{-1}$, where $x_1, x_2\in X$ and $s_1, s_2\in S$.

\textbf{$(\rm iii)\Rightarrow (\rm i)$}:
Assume that there exists element $g$ such that $g$ is adjacent to both $x_1$ and $x_2$, where $x_1$ and $x_2$ are different elements in $X$.
Then $\alpha(x_1^{-1})g$ and $\alpha(x_2^{-1})g$ are belong to $S$.
As a result,
\begin{equation*}
\alpha(x_1^{-1})g(\alpha(x_2^{-1})g)^{-1}=\alpha(x_1^{-1})\alpha(x_2)\in \alpha(X^{-1})\alpha(X)\cap SS^{-1},
\end{equation*}
which leads to a contradiction.
\end{proof}

\begin{lemma}\label{lem2}
Let $S$ be a generalized Cayley subset of $G$ induced by the involutory automorphism $\alpha$ and $X$ a subset of $G$.
In $GC(G, S, \alpha)$, each vertex in $G\setminus X$ is adjacent to at least one vertex in $X$ if and only if $G\setminus X\subseteq \cup_{s\in S}(\alpha(X)s)$.
\end{lemma}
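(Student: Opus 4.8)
The plan is to unwind the definition of adjacency in $GC(G,S,\alpha)$ and then observe that the stated covering condition is nothing more than a rewriting of the phrase ``each vertex in $G\setminus X$ has a neighbor in $X$.'' No clever device is needed; the content is in expressing the edge relation in the form that produces the union $\bigcup_{s\in S}\alpha(X)s$.

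First I would fix a vertex $g\in G\setminus X$ together with an element $x\in X$ and record precisely when $\{g,x\}$ is an edge. By definition $\{g,x\}$ is an edge iff $\alpha(g^{-1})x\in S$. Since the graph is undirected, this must coincide with $\alpha(x^{-1})g\in S$, and indeed one checks $\alpha(x^{-1})g=\alpha\!\left((\alpha(g^{-1})x)^{-1}\right)$: writing $s=\alpha(g^{-1})x$, one has $(\alpha(g^{-1})x)^{-1}=x^{-1}\alpha(g)$, so $\alpha(x^{-1})g=\alpha(s^{-1})$, and from $\alpha(S)=S^{-1}$ and $\alpha^2=\id$ one gets $s\in S\iff\alpha(s^{-1})\in S$. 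I would use this second form of the edge relation, because $\alpha(x^{-1})=\alpha(x)^{-1}$ lets me solve $\alpha(x^{-1})g\in S$ as $g=\alpha(x)s$ for some $s\in S$, i.e. $g\in\alpha(x)S$.

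With this in hand the statement ``$g$ is adjacent to at least one vertex of $X$'' becomes ``$g\in\bigcup_{x\in X}\alpha(x)S$.'' A routine regrouping of the union gives $\bigcup_{x\in X}\alpha(x)S=\bigcup_{x\in X}\bigcup_{s\in S}\{\alpha(x)s\}=\bigcup_{s\in S}\alpha(X)s$, so $g$ has a neighbor in $X$ exactly when $g\in\bigcup_{s\in S}\alpha(X)s$. Quantifying over all $g$, the assertion that every vertex of $G\setminus X$ is adjacent to at least one vertex of $X$ is then literally the inclusion $G\setminus X\subseteq\bigcup_{s\in S}\alpha(X)s$, which supplies both directions of the equivalence simultaneously.

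The only point that deserves care, rather than being a genuine obstacle, is the symmetry $\alpha(g^{-1})x\in S\iff\alpha(x^{-1})g\in S$; getting this right is what makes the correct rearrangement $g\in\alpha(x)S$ (as opposed to $x\in\alpha(g)S$) available, and hence what produces the union $\bigcup_{s\in S}\alpha(X)s$ on the correct side. Everything after that is a direct manipulation of the defining union, so I expect the proof to be short.
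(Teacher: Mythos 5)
Your argument is correct: the key identity $\alpha(x^{-1})g=\alpha\bigl((\alpha(g^{-1})x)^{-1}\bigr)$ together with $\alpha(S)=S^{-1}$ and $\alpha^2=\id$ justifies rewriting adjacency as $g\in\alpha(x)S$, and the regrouping of the union then gives the stated equivalence. The paper states this lemma without proof, and your unwinding is exactly the routine verification it implicitly relies on (the same manipulation appears inside the proof of Lemma~\ref{lem1}), so there is nothing to add.
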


\begin{lemma}\label{lem3}
Let $S$ be a generalized Cayley subset of $G$ induced by the involutory automorphism $\alpha$ and $X$ a subset of $G$.
$X$ is an independent set of $GC(G, S, \alpha)$ if and only if $X\cap \alpha(X)s=\emptyset$ for each $s\in S$, that is $\alpha(X^{-1})X\cap S=\emptyset$.
\end{lemma}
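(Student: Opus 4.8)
The plan is to rewrite the definition of an independent set directly in terms of the adjacency rule of $GC(G,S,\alpha)$ and then translate the resulting condition into the two equivalent set equations. Recall that two vertices $g,h$ are adjacent precisely when $\alpha(g^{-1})h\in S$, so $X$ is an independent set if and only if $\alpha(x_1^{-1})x_2\notin S$ for every pair of \emph{distinct} elements $x_1,x_2\in X$. My first step is to record this reformulation carefully, keeping track of the quantifier over distinct pairs, since the stated set conditions will implicitly quantify over all pairs.

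Next I would connect this with the condition $X\cap\alpha(X)s=\emptyset$ for each $s\in S$. Observe that $X\cap\alpha(X)s\neq\emptyset$ for some $s\in S$ means there are $x_1,x_2\in X$ with $x_1=\alpha(x_2)s$; since $\alpha$ is an automorphism this rearranges to $s=\alpha(x_2)^{-1}x_1=\alpha(x_2^{-1})x_1\in S$, i.e.\ to the adjacency of $x_2$ and $x_1$. Thus the failure of $X\cap\alpha(X)s=\emptyset$ is exactly the existence of an edge inside $X$, and collecting these equalities over all $x_1,x_2\in X$ rewrites the family of conditions as the single equation $\alpha(X^{-1})X\cap S=\emptyset$, which is the second stated form.

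The one point that needs care --- and the only genuine obstacle --- is the discrepancy between the ``distinct pairs'' appearing in the definition of independence and the ``all pairs'' tacitly allowed in $\alpha(X^{-1})X$ and in $\alpha(X)s$. I would dispatch this by examining the diagonal case $x_1=x_2=x$: here $\alpha(x^{-1})x$ lies in $\omega_\alpha(G)$ by definition, and since $S$ is a generalized Cayley subset we have $S\cap\omega_\alpha(G)=\emptyset$, so no diagonal term can ever land in $S$. Hence allowing $x_1=x_2$ contributes nothing to the intersection with $S$, and the condition over all pairs is equivalent to the condition over distinct pairs. Assembling these reformulations then yields the chain of equivalences claimed in the lemma.
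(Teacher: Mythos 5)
Your proof is correct; the paper states Lemma \ref{lem3} without proof, and your argument is exactly the straightforward unwinding of the adjacency rule that the authors leave implicit. The one substantive point --- that the diagonal terms $\alpha(x^{-1})x$ lie in $\omega_\alpha(G)$ and hence never meet $S$, so quantifying over all pairs is the same as quantifying over distinct pairs --- is handled correctly and is indeed the only place where the hypothesis that $S$ is a generalized Cayley subset is needed.
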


Based on the statements above, we give the equivalent condition of a subset to be perfect code.

\begin{lemma}\label{lem4}
Let $S=\{s_1, \cdots, s_r\}$ be a generalized Cayley subset of $G$ induced by the involutory automorphism $\alpha$ and $X$ a subset of $G$.
The following conditions are equivalent:
\begin{enumerate}[{\rm(i)}]

\item $X$ is a perfect code of $GC(G, S, \alpha)$;

\item $\{X, \alpha(X)s_1, \cdots, \alpha(X)s_r\}$ is a partition of $G$;

\item $|G|=|X|(r+1)$, $\alpha(X^{-1})X\cap S=\emptyset$ and $(\alpha(X^{-1})\alpha(X))\cap(SS^{-1})=\emptyset$.

\end{enumerate}
\end{lemma}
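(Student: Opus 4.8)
The plan is to establish the two equivalences (i) $\Leftrightarrow$ (ii) and (ii) $\Leftrightarrow$ (iii), leaning on Lemmas \ref{lem1}--\ref{lem3} throughout. First I would record the defining split of the perfect-code property: by definition $X$ is a perfect code exactly when (a) $X$ is an independent set, (b) every vertex of $G\setminus X$ has at least one neighbour in $X$, and (c) every vertex of $G\setminus X$ has at most one neighbour in $X$. Lemma \ref{lem3} rewrites (a) as $X\cap\alpha(X)s=\emptyset$ for every $s\in S$; Lemma \ref{lem2} rewrites (b) as $G\setminus X\subseteq\bigcup_{s\in S}\alpha(X)s$; and Lemma \ref{lem1} rewrites the at-most-one property as the pairwise disjointness of the translates $\alpha(X)s_1,\dots,\alpha(X)s_r$, equivalently $(\alpha(X^{-1})\alpha(X))\cap(SS^{-1})=\{e\}$. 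One small point to verify here is that Lemma \ref{lem1} phrases the at-most-one condition for all vertices of $G$, whereas a perfect code only constrains $G\setminus X$; but under (a) each $x\in X$ has no neighbour in $X$ at all, so the two readings coincide and Lemma \ref{lem1} may be invoked freely.

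For (i) $\Rightarrow$ (ii) I would simply assemble the three rewrites. Disjointness of $X$ from each $\alpha(X)s_i$ (from (a)), pairwise disjointness of the $\alpha(X)s_i$ (from (c)), and the covering $G=X\cup\bigcup_i\alpha(X)s_i$ (from (b), together with $X\subseteq G$) say precisely that $\{X,\alpha(X)s_1,\dots,\alpha(X)s_r\}$ is a partition of $G$. Reading these same three facts in reverse yields (ii) $\Rightarrow$ (i), so (i) $\Leftrightarrow$ (ii) is immediate once the lemmas are in hand.

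The remaining work is (ii) $\Leftrightarrow$ (iii), which is where the counting enters. The key elementary observation is that every block has the same size as $X$: since $\alpha$ is an automorphism and right translation by $s_i$ is a bijection, $|\alpha(X)s_i|=|X|$ for each $i$. Assuming (ii), the $r+1$ blocks are pairwise disjoint and cover $G$, so additivity of cardinality over a partition gives $|G|=(r+1)|X|$, while the disjointness of $X$ from the translates and of the translates from one another supplies, via Lemmas \ref{lem3} and \ref{lem1}, the two algebraic identities in (iii). For (iii) $\Rightarrow$ (ii) I would run this backwards: $\alpha(X^{-1})X\cap S=\emptyset$ makes $X$ disjoint from each $\alpha(X)s_i$ (Lemma \ref{lem3}), and $(\alpha(X^{-1})\alpha(X))\cap(SS^{-1})=\{e\}$ makes the translates pairwise disjoint (Lemma \ref{lem1}); thus all $r+1$ blocks are pairwise disjoint subsets of $G$, each of cardinality $|X|$, with total cardinality $(r+1)|X|=|G|$.

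I expect the main (and essentially only) obstacle to be this last step: the covering condition (b) is \emph{not} among the hypotheses of (iii), so I must deduce it from disjointness together with the cardinality equation. This is exactly where finiteness of $G$ is used: pairwise disjoint subsets of a finite $G$ whose sizes sum to $|G|$ must exhaust $G$, which promotes disjointness to a genuine partition. A secondary point worth flagging is the at-most-one identity, which Lemma \ref{lem1} states as $(\alpha(X^{-1})\alpha(X))\cap(SS^{-1})=\{e\}$ rather than $=\emptyset$: since $e$ always lies in both $\alpha(X^{-1})\alpha(X)$ and $SS^{-1}$, the correct form is $\{e\}$, and I would phrase the third condition of (iii) accordingly so that it matches Lemma \ref{lem1}.
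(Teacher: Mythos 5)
Your proof is correct and follows essentially the same route as the paper: both reduce (i), (ii), (iii) to the three conditions supplied by Lemmas \ref{lem1}--\ref{lem3} (independence, covering, at-most-one-neighbour) and then use the counting argument $|G|=(r+1)|X|$ over pairwise disjoint blocks of equal size, the only cosmetic difference being that you chain the equivalences as (i)$\Leftrightarrow$(ii)$\Leftrightarrow$(iii) while the paper does (i)$\Leftrightarrow$(iii) and (ii)$\Leftrightarrow$(iii). Your remark that the third condition in (iii) should read $(\alpha(X^{-1})\alpha(X))\cap(SS^{-1})=\{e\}$ rather than $=\emptyset$ is a genuine and correct fix: since $e$ always lies in both sets when $X$ and $S$ are nonempty, the condition as printed is vacuously false, and the paper itself silently uses the $\{e\}$ form from Lemma \ref{lem1}.
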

\begin{proof}
Let $X$ be a perfect code of $GC(G, S, \alpha)$.
Since each element in $G\setminus X$ is adjacent to exactly one vertex in $X$ and $X$ is an independent set, by Lemmas \ref{lem1} to \ref{lem4}, we obtain  $\rm{(i)}$ is equivalent to  $\rm{(iii)}$.

$\rm{(ii)}\Leftrightarrow \rm{(iii)}$: $\{X, \alpha(X)s_1, \cdots, \alpha(X)s_r\}$ construct a partition of $G$ is equivalent to $|G|=|X|(r+1)$, and for different integers $i$ and $j$,
$X\cap \alpha(X)s_i=\emptyset$ and  $\alpha(X)s_i\cap \alpha(X)s_j=\emptyset$, where $1\leq i, j\leq r$.
By Lemma \ref{lem1} and \ref{lem3}, $X\cap \alpha(X)s_i=\emptyset$ is equivalent to  $\alpha(X^{-1})X\cap S=\emptyset$, and
$\alpha(X)s_i\cap \alpha(X)s_j=\emptyset$ is equivalent to $(\alpha(X^{-1})\alpha(X))\cap(SS^{-1})=\emptyset$.
\end{proof}

Especially, we find that if a subgroup $H$ of $G$ is perfect code of $GC(G, S, \alpha)$, then it is fixed by $\alpha$.

\begin{proposition}\label{pro1}
Let $H$ be a subgroup of $G$.
If $H$ is subgroup perfect code of  graph $GC(G, S, \alpha)$, then $S\cap \alpha(H)=\emptyset$.
Further, we have $\alpha(H)=H$.
\end{proposition}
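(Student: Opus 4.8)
The plan is to read everything off the perfect-code characterization in Lemma~\ref{lem4}, exploiting the extra structure that $H$ is a subgroup. In particular I will use that $e\in H$, that $H^{-1}=H$, and that $\alpha(H)$ is again a subgroup of the same order as $H$ (being the image of a subgroup under an automorphism, so that $\alpha(H^{-1})=\alpha(H)^{-1}=\alpha(H)$ and $|\alpha(H)|=|H|$).

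First I would settle $S\cap\alpha(H)=\emptyset$. Taking $X=H$ in Lemma~\ref{lem4}(iii), a perfect code satisfies $\alpha(H^{-1})H\cap S=\emptyset$; since $H$ is a subgroup, $H^{-1}=H$ and this reads $\alpha(H)H\cap S=\emptyset$. Because $e\in H$, we have $\alpha(H)=\alpha(H)\{e\}\subseteq\alpha(H)H$, whence $\alpha(H)\cap S\subseteq\alpha(H)H\cap S=\emptyset$. This is precisely the first assertion.

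Next I would upgrade this to $\alpha(H)=H$ using the partition form of the characterization. By Lemma~\ref{lem4}(ii), the sets $H,\alpha(H)s_1,\dots,\alpha(H)s_r$ partition $G$. Pick any $h'\in\alpha(H)$; it lies in exactly one block. If it were to land in some $\alpha(H)s_i$, then $h'=\alpha(h)s_i$ for some $h\in H$, and writing $h'=\alpha(h'')$ with $h''\in H$ we would obtain $s_i=\alpha(h)^{-1}\alpha(h'')=\alpha(h^{-1}h'')\in\alpha(H)$, contradicting $S\cap\alpha(H)=\emptyset$ just proved. Hence $h'$ lies in the block $H$, so $\alpha(H)\subseteq H$. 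Since $|\alpha(H)|=|H|$ and $G$ is finite, this inclusion forces $\alpha(H)=H$.

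I do not anticipate a serious obstacle: once $S\cap\alpha(H)=\emptyset$ is in hand, the inclusion $\alpha(H)\subseteq H$ is a short disjointness argument and the order equality closes it. The only point requiring genuine care is the bookkeeping with $\alpha$ and inverses — remembering that an involutory automorphism preserves subgroups, commutes with inversion as $\alpha(g^{-1})=\alpha(g)^{-1}$, and preserves cardinality — all of which are invoked implicitly above. Everything else reduces to the characterizations already established.
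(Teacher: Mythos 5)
Your proof is correct and follows essentially the same route as the paper: both read the claim off the independence/partition characterization of Lemma~\ref{lem4}, and your second step (each element of $\alpha(H)$ must lie in the block $H$, giving $\alpha(H)\subseteq H$ and hence equality by cardinality) is exactly the paper's argument. The only cosmetic difference is in the first assertion, where you deduce $S\cap\alpha(H)=\emptyset$ directly from $\alpha(H^{-1})H\cap S=\emptyset$ together with $e\in H$, whereas the paper argues by contradiction using $H\cap\alpha(H)s_i=\emptyset$ and the closure property $\alpha(s^{-1})\in S$; your version is marginally more direct.
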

\begin{proof}
Let $S=\{s_1, \cdots, s_r\}$.
Firstly, by lemma \ref{lem4}, $H\cap \alpha(H)s_i=\emptyset$ for each $s_i\in S$.
We prove the result by negation.
Assume $S\cap \alpha(H)\neq\emptyset$ and let $s\in S\cap \alpha(H)$.
Then $\alpha(s^{-1})\in H\cap \alpha(H)\alpha(s^{-1})$, which is a contradiction as $\alpha(s^{-1})\in S$.
Hence, $S\cap \alpha(H)=\emptyset$, which indicates that $\alpha(H)\cap\alpha(H)s_i=\emptyset$ for each $s_i\in S$.
Recall that $G=H\cup \alpha(H)s_1\cup \cdots \cup\alpha(H)s_r$, we have $\alpha(H)\subseteq H$.
\end{proof}

\begin{corollary}\label{cor1}
Let $S$ be a generalized Cayley subset of $G$ induced by the involutory automorphism $\alpha$.
Then the subgroup $H$ of $G$ is  perfect code of $GC(G, S, \alpha)$ if and only if $\alpha(H)=H$ and $\{e\}\cup S$ is a set of coset representatives of $H$.
\end{corollary}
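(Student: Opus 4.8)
The plan is to derive both directions of the equivalence from Lemma \ref{lem4} together with Proposition \ref{pro1}, the guiding observation being that the hypothesis $\alpha(H)=H$ collapses the $\alpha$-twisted cosets $\alpha(H)s_i$ occurring in Lemma \ref{lem4} into ordinary right cosets $Hs_i$, after which the statement becomes a translation between ``partition by twisted cosets'' and ``complete set of coset representatives.''

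For the forward direction, suppose $H$ is a subgroup perfect code of $GC(G,S,\alpha)$. Proposition \ref{pro1} immediately yields $\alpha(H)=H$, which is half of the desired conclusion. For the remainder I would invoke the equivalence $\mathrm{(i)}\Leftrightarrow\mathrm{(ii)}$ of Lemma \ref{lem4} with $X=H$, so that $\{H,\alpha(H)s_1,\dots,\alpha(H)s_r\}$ is a partition of $G$. Substituting $\alpha(H)=H$ turns this into the assertion that $\{H,Hs_1,\dots,Hs_r\}$ is a partition of $G$, i.e.\ the right cosets with representatives $e,s_1,\dots,s_r$ are pairwise distinct and exhaust $G$. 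Since $e\notin S$ (because $e=\alpha(e^{-1})e\in\omega_\alpha(G)$ whereas $S\cap\omega_\alpha(G)=\emptyset$), the set $\{e\}\cup S$ has exactly $r+1$ elements and is therefore a full set of right coset representatives of $H$.

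For the converse, assume $\alpha(H)=H$ and that $\{e\}\cup S$ is a set of coset representatives of $H$. Using $\alpha(H)=H$ again, each $\alpha(H)s_i$ equals the right coset $Hs_i$, and $H=He$; by the coset-representative hypothesis these $r+1$ right cosets are distinct and cover $G$, so $\{H,\alpha(H)s_1,\dots,\alpha(H)s_r\}$ is a partition of $G$. Condition $\mathrm{(ii)}$ of Lemma \ref{lem4} is then met with $X=H$, whence $H$ is a perfect code of $GC(G,S,\alpha)$.

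I do not anticipate a genuine obstacle: once $\alpha(H)=H$ is secured, the whole corollary reduces to the bookkeeping identity between Lemma \ref{lem4}'s twisted partition and a complete set of coset representatives. The only points needing care are verifying $e\notin S$ so that $\{e\}\cup S$ has the correct cardinality $r+1$, and remaining consistent about working with right cosets throughout, which matches the form $\alpha(H)s_i$ appearing in Lemma \ref{lem4}.
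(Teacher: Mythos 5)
Your proof is correct and follows exactly the route the paper intends: the corollary is stated without proof, but it is positioned to be read off from Proposition \ref{pro1} (giving $\alpha(H)=H$) together with the partition criterion of Lemma \ref{lem4}(ii), which is precisely your argument. Your added check that $e\notin S$ (since $e\in\omega_\alpha(G)$ and $S\cap\omega_\alpha(G)=\emptyset$) is a worthwhile detail the paper leaves implicit.
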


\begin{remark}
 Recall that $\alpha(S)=S^{-1}$.
If $H$  is  subgroup perfect code of $GC(G, S, \alpha)$, then
\begin{eqnarray*}
G
&=&H\cup Hs_1\cup\cdots \cup Hs_r\\
&=&\alpha(H)\cup \alpha(Hs_1)\cup\cdots \cup \alpha(Hs_r)\\
&=&H\cup H\alpha(s_1)\cup\cdots \cup H\alpha(s_r)\\
&=&H\cup Hs_1^{-1}\cup\cdots \cup Hs_r^{-1}\\
&=&H\cup s_1H\cup\cdots \cup s_rH.
\end{eqnarray*}
Thus, in Corollary \ref{cor1}, $\{e\}\cup S$ is not only  a set of right coset representatives of $H$, but also a set of left coset representatives of $H$.
\end{remark}

\begin{definition}
Given a group $G$, a subgroup $H$ of $G$, and an involution  $\alpha\in \Aut(G)$ , we call a subset $T$ of $G$ a generalized Cayley transversal of $H$ in $G$ if $T$ containing identity element $e$, $T\setminus \{e\}$ is a generalized Cayley subset induced by $\alpha$ and $T$ is a set of coset representatives of $H$ in $G$.
\end{definition}

\begin{remark}
Note that for $s\in S$ with $\alpha(s)\neq s^{-1}$, $\{s,e\}$ and $\{s, \alpha(s)s\}$ are edges of $GC(G, S, \alpha)$.
Thus if  $H$ is a subgroup  perfect code of $GC(G, S, \alpha)$, then  $\alpha(s)s\notin H$.
Since $\alpha(H)=H$, we have $s\alpha(s)\notin H$, which is equivalent to $H\alpha(s^{-1})\neq Hs$.
\end{remark}

Some basic results about subgroup perfect code are presented as following.

\begin{proposition}\label{pro2.10}
Let $H$ be a subgroup of $G$, $\alpha$ an involution in $\Aut(G)$.
For any element $g\in \Fix_\alpha(G)$, if $H$ is perfect code of $GC(G, S, \alpha)$ for some generalized Cayley subset induced by $\alpha$, then $g^{-1}Hg$ is perfect code of $GC(G, g^{-1}Sg, \alpha)$.
\end{proposition}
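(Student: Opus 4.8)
The plan is to invoke Corollary \ref{cor1}, which reduces the statement ``$g^{-1}Hg$ is a perfect code of $GC(G, g^{-1}Sg, \alpha)$'' to two conditions: $\alpha(g^{-1}Hg) = g^{-1}Hg$, and that $\{e\} \cup g^{-1}Sg$ is a set of coset representatives of $g^{-1}Hg$. Before applying it, however, I must first check that $g^{-1}Sg$ is itself a legitimate generalized Cayley subset induced by $\alpha$; this is the part that genuinely uses the hypothesis $g \in \Fix_\alpha(G)$, i.e.\ $\alpha(g) = g$ (and hence $\alpha(g^{-1}) = g^{-1}$). Throughout I write $S = \{s_1, \ldots, s_r\}$ and freely use that $H$ being a perfect code forces $\alpha(H) = H$ (Proposition \ref{pro1}) and that, by Lemma \ref{lem4}(ii), $G = H \cup Hs_1 \cup \cdots \cup Hs_r$ is a disjoint partition.

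To verify that $g^{-1}Sg$ is a generalized Cayley subset, two things are needed. The condition $\alpha(g^{-1}Sg) = (g^{-1}Sg)^{-1}$ is a one-line computation: since $\alpha(g) = g$,
\[
\alpha(g^{-1}Sg) = g^{-1}\alpha(S)g = g^{-1}S^{-1}g = (g^{-1}Sg)^{-1}.
\]
The condition $(g^{-1}Sg) \cap \omega_\alpha(G) = \emptyset$ is the crux. I will establish the stronger and cleaner fact that $\omega_\alpha(G)$ is invariant under conjugation by any element of $\Fix_\alpha(G)$: given $\alpha(h^{-1})h \in \omega_\alpha(G)$, setting $k = hg^{-1}$ and using $\alpha(g) = g$ gives $\alpha(k^{-1})k = g\,\alpha(h^{-1})h\,g^{-1}$, so that $g\,\omega_\alpha(G)\,g^{-1} = \omega_\alpha(G)$. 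Conjugating the relation $S \cap \omega_\alpha(G) = \emptyset$ by $g^{-1}$ then yields $(g^{-1}Sg) \cap \omega_\alpha(G) = \emptyset$.

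With $g^{-1}Sg$ confirmed admissible, the two conditions of Corollary \ref{cor1} follow quickly. First, $\alpha(g^{-1}Hg) = g^{-1}\alpha(H)g = g^{-1}Hg$, again using $\alpha(g) = g$ and $\alpha(H) = H$. Second, I apply the bijection $\phi \colon x \mapsto g^{-1}xg$ of $G$ to the partition above. Since $\phi(H) = g^{-1}Hg$ and $\phi(Hs_i) = (g^{-1}Hg)(g^{-1}s_ig)$, and $\phi$ maps $G$ bijectively onto itself, this produces the partition
\[
G = (g^{-1}Hg) \cup (g^{-1}Hg)(g^{-1}s_1g) \cup \cdots \cup (g^{-1}Hg)(g^{-1}s_rg),
\]
which is exactly the assertion that $\{e\} \cup g^{-1}Sg$ is a set of coset representatives of $g^{-1}Hg$. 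Corollary \ref{cor1} then gives the conclusion.

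I expect the only real obstacle to be the disjointness $(g^{-1}Sg) \cap \omega_\alpha(G) = \emptyset$: one must resist naively conjugating and instead observe that $\omega_\alpha(G)$ is conjugation-invariant precisely under fixed-point elements, which is where the hypothesis $g \in \Fix_\alpha(G)$ is indispensable (for a general $g$ the identity $\alpha(k^{-1})k = g\,\alpha(h^{-1})h\,g^{-1}$ with $k = hg^{-1}$ fails because $\alpha(g) \neq g$). Everything else is routine transport of structure along the conjugation bijection $\phi$.
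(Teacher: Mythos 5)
Your proposal is correct and follows essentially the same route as the paper: both verify that $g^{-1}Sg$ remains a generalized Cayley subset by showing $\omega_\alpha(G)$ is invariant under conjugation by the $\alpha$-fixed element $g$ and computing $\alpha(g^{-1}Sg)=(g^{-1}Sg)^{-1}$, then transport the coset partition $G=H\cup Hs_1\cup\cdots\cup Hs_r$ along conjugation and invoke Corollary \ref{cor1}. Your write-up is if anything slightly more complete, since you explicitly check $\alpha(g^{-1}Hg)=g^{-1}Hg$, a step the paper leaves implicit.
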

\begin{proof}
Let $S=\{s_1, s_2, \cdots, s_r\}$.
Note that
\begin{equation*}
g^{-1}\omega_\alpha(G)g=\alpha(g^{-1})\{\alpha(x^{-1})x\mid x\in G\}g=\{\alpha(xg)^{-1}xg\mid x\in G\}=\omega_\alpha(G).
\end{equation*}
In view of $S\cap \omega_\alpha(G)=\emptyset$, we obtain that $g^{-1}Sg\cap g^{-1}\omega_\alpha(G)g=g^{-1}Sg\cap \omega_\alpha(G)=\emptyset$.
Combining the fact that
\begin{equation*}
\alpha(g^{-1}Sg)=g^{-1}\alpha(S)g=g^{-1}S^{-1}g=(g^{-1}Sg)^{-1},
\end{equation*}
it follows that $g^{-1}Sg$ is a generalized Cayley subset of $G$ induced by $\alpha$.
Since $H$ is perfect code of $GC(G, S, \alpha)$, $\alpha(H)=H$ and $G=H\cup Hs_1\cup \cdots \cup Hs_r$.
Further,  we have
\begin{align*}
G&=g^{-1}Hg\cup g^{-1}Hs_1g\cup \cdots \cup g^{-1}Hs_rg\\
&=g^{-1}Hg\cup (g^{-1}Hg)(g^{-1}s_1g)\cup \cdots \cup (g^{-1}Hg)(g^{-1}s_rg).
\end{align*}
By Corollary \ref{cor1}, $g^{-1}Hg$ is perfect code of $GC(G, g^{-1}Sg, \alpha)$.
\end{proof}

\begin{proposition}\label{pro2.11}
Let $H$ be a subgroup of $G$ and $\alpha$ an involutory automorphism of $G$.
For any $\beta\in \Aut(G)$, if $H$ is perfect code of $GC(G, S, \alpha)$ for some generalized Cayley subset induced by $\alpha$, then $H^\beta$ is perfect code of $GC(G, S^\beta, \alpha^\beta)$, where $\alpha^\beta=\beta\alpha\beta^{-1}$.
\end{proposition}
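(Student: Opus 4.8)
The plan is to reduce everything to Corollary~\ref{cor1}, which characterizes a subgroup perfect code by the two conditions $\alpha(H)=H$ and ``$\{e\}\cup S$ is a set of coset representatives of $H$''. I read the exponent notation as $H^\beta=\beta(H)$ and $S^\beta=\beta(S)$, so that $x^\beta=\beta(x)$ for a single element; this is consistent with $\alpha^\beta=\beta\alpha\beta^{-1}$, since $\alpha^\beta(x^\beta)=\beta\alpha\beta^{-1}(\beta(x))=\beta(\alpha(x))=(\alpha(x))^\beta$. Before invoking Corollary~\ref{cor1} I must first check that $GC(G,S^\beta,\alpha^\beta)$ is even defined in the sense of this paper, namely that $\alpha^\beta$ is an involutory automorphism and that $S^\beta$ is a generalized Cayley subset induced by $\alpha^\beta$.

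\emph{First step.} That $\alpha^\beta\in\Aut(G)$ is clear, and $(\alpha^\beta)^2=\beta\alpha^2\beta^{-1}=\id$, so $\alpha^\beta$ is involutory. For $S^\beta$ the two defining requirements are $\alpha^\beta(S^\beta)=(S^\beta)^{-1}$ and $S^\beta\cap\omega_{\alpha^\beta}(G)=\emptyset$. The first is immediate, since $\alpha^\beta(\beta(S))=\beta\alpha(S)=\beta(S^{-1})=\beta(S)^{-1}$. The second is the step I expect to require the most care, and the key identity is $\omega_{\alpha^\beta}(G)=\beta(\omega_\alpha(G))$, which I would establish by the substitution $g=\beta(h)$ exactly in the style of Proposition~\ref{pro2.10}: for any $g\in G$, writing $g=\beta(h)$ gives $\alpha^\beta(g^{-1})g=\beta(\alpha(h^{-1}))\beta(h)=\beta(\alpha(h^{-1})h)$, and letting $h$ range over $G$ yields the claimed equality. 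Granting this, $S^\beta\cap\omega_{\alpha^\beta}(G)=\beta(S)\cap\beta(\omega_\alpha(G))=\beta(S\cap\omega_\alpha(G))=\emptyset$, because $\beta$ is a bijection and $S\cap\omega_\alpha(G)=\emptyset$.

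\emph{Second step.} With $GC(G,S^\beta,\alpha^\beta)$ now legitimate, the conclusion follows by transporting the two conditions of Corollary~\ref{cor1} through $\beta$. Since $H$ is a perfect code of $GC(G,S,\alpha)$, we have $\alpha(H)=H$, whence $\alpha^\beta(\beta(H))=\beta\alpha\beta^{-1}\beta(H)=\beta(\alpha(H))=\beta(H)$, i.e.\ $\alpha^\beta$ fixes $H^\beta$. Likewise, writing $S=\{s_1,\dots,s_r\}$, the statement that $\{e\}\cup S$ is a set of coset representatives of $H$ means $G=H\cup Hs_1\cup\cdots\cup Hs_r$ (a disjoint union); applying the bijection $\beta$ and using $\beta(e)=e$ gives $G=\beta(H)\cup\beta(H)\beta(s_1)\cup\cdots\cup\beta(H)\beta(s_r)$, so $\{e\}\cup S^\beta$ is a set of coset representatives of $\beta(H)=H^\beta$. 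By Corollary~\ref{cor1}, $H^\beta$ is a perfect code of $GC(G,S^\beta,\alpha^\beta)$.

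The only genuinely delicate point is the computation $\omega_{\alpha^\beta}(G)=\beta(\omega_\alpha(G))$; everything else is a formal consequence of $\beta$ being an automorphism fixing $e$ (hence preserving disjointness and cardinality of cosets) together with the already-established Corollary~\ref{cor1}.
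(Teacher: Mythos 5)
Your proposal is correct and follows essentially the same route as the paper: verify that $\alpha^\beta$ is an involutory automorphism and that $S^\beta$ is a generalized Cayley subset induced by $\alpha^\beta$ (via the identity $\omega_{\alpha^\beta}(G)=\beta(\omega_\alpha(G))$, which the paper states in the equivalent form $\beta^{-1}(\omega_{\alpha^\beta}(G))=\omega_\alpha(G)$), then transport the two conditions of Corollary~\ref{cor1} through $\beta$. No substantive differences.
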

\begin{proof}
Let $S=\{s_1, s_2, \cdots, s_r\}$.
Observe that $\alpha^\beta$ is an involutory automorphism of $G$ and
\begin{align*}
\beta^{-1}(\omega_{\alpha^\beta}(G))&=\beta^{-1}\{\beta\alpha\beta^{-1}(g^{-1})g\mid g\in G\}\\
&=\{\alpha\beta^{-1}(g^{-1})\beta^{-1}(g)\mid g\in G\}\\
&=\{\alpha(\beta^{-1}(g))^{-1}(\beta^{-1}(g))\mid g\in G\}\\
&=\omega_\alpha(G),
\end{align*}
we have
\begin{equation*}
S^\beta\cap \omega_{\alpha^\beta}(G)=\beta(S\cap \beta^{-1}(\omega_{\alpha^\beta}(G)))=\beta(S\cap \omega_\alpha(G))=\emptyset.
\end{equation*}
Since $\alpha^\beta(S^\beta)=\beta\alpha\beta^{-1}\beta(S)=\beta\alpha(S)
=\beta(S^{-1})=(S^\beta)^{-1}$, $S^\beta$ is generalized Cayley subset of $G$ induced by $\alpha^\beta$.
 As $H$ is perfect code of $GC(G, S, \alpha)$, by Corollary \ref{cor1}, $\alpha(H)=H$, and $G=H\cup Hs_1\cup \cdots \cup Hs_r$.
Thus, $\alpha^\beta(H^\beta)=\beta\alpha(H)=\beta(H)=H^\beta$ and
\begin{align*}
G&=\beta(H)\cup \beta(Hs_1)\cup \cdots \cup \beta(Hs_r)\\
&=H^\beta\cup H^\beta s_1^\beta\cup \cdots \cup H^\beta s_r^\beta.
\end{align*}
Applying Corollary \ref{cor1} again, $H^\beta$ is subgroup perfect code of $GC(G, S^\beta, \alpha^\beta)$.
\end{proof}

Based on the above two propositions, we obtain the following result.

\begin{proposition}\label{pro2.12}
Let $H$ be a subgroup of $G$ and $\alpha$ an involutory automorphism of $G$.
For any $\beta\in \Aut(G)$ and $g\in \Fix_{\alpha^\beta}(G)$, if $H$ is perfect code of $GC(G, S, \alpha)$ for some generalized Cayley subset induced by $\alpha$, then $H^\beta$ is is perfect code of $GC(G, g^{-1}S^\beta g, \alpha^\beta)$.
\end{proposition}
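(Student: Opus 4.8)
The plan is to prove the statement directly through Corollary~\ref{cor1}, establishing that $H^\beta$ \emph{itself} is a perfect code of $GC(G, g^{-1}S^\beta g, \alpha^\beta)$. Before invoking that characterization I must confirm the target graph is well defined, i.e.\ that $g^{-1}S^\beta g$ is a generalized Cayley subset induced by $\alpha^\beta$; this is immediate from the computations already performed in Propositions~\ref{pro2.11} and~\ref{pro2.10}. Indeed, Proposition~\ref{pro2.11} yields that $S^\beta$ is a generalized Cayley subset induced by the involution $\alpha^\beta$, and the computation of Proposition~\ref{pro2.10}, run now with the involution $\alpha^\beta$ and the fixed point $g\in\Fix_{\alpha^\beta}(G)$, gives both $g^{-1}\omega_{\alpha^\beta}(G)g=\omega_{\alpha^\beta}(G)$ and $\alpha^\beta(g^{-1}S^\beta g)=(g^{-1}S^\beta g)^{-1}$, so that $g^{-1}S^\beta g$ is such a subset. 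It then remains to verify the two conditions of Corollary~\ref{cor1} for $H^\beta$, namely $\alpha^\beta(H^\beta)=H^\beta$ and that $\{e\}\cup g^{-1}S^\beta g$ is a set of coset representatives of $H^\beta$.

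The invariance condition is the routine step. Since $H$ is a perfect code of $GC(G,S,\alpha)$, Proposition~\ref{pro1} gives $\alpha(H)=H$, whence
\[
\alpha^\beta(H^\beta)=\beta\alpha\beta^{-1}\beta(H)=\beta\alpha(H)=\beta(H)=H^\beta .
\]
Likewise the counting part of Lemma~\ref{lem4}(iii) is automatic: $H$ being a perfect code gives $|G|=|H|(r+1)$, and since $|H^\beta|=|H|$ and $|g^{-1}S^\beta g|=r$, we get $|G|=|H^\beta|(r+1)$.

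The real content, and the step I expect to be the main obstacle, is showing that $\{e\}\cup g^{-1}S^\beta g$ is a transversal of $H^\beta$. By Corollary~\ref{cor1} this asks that the cosets $H^\beta$ and $H^\beta g^{-1}s_i^\beta g$ $(1\le i\le r)$ be pairwise distinct; two of the nontrivial ones coincide iff $g^{-1}\bigl(s_i^\beta (s_j^\beta)^{-1}\bigr)g\in H^\beta$, that is iff $s_i^\beta (s_j^\beta)^{-1}\in gH^\beta g^{-1}$, and similarly $H^\beta g^{-1}s_i^\beta g=H^\beta$ iff $s_i^\beta\in gH^\beta g^{-1}$. Now applying $\beta$ to the partition $G=H\cup Hs_1\cup\cdots\cup Hs_r$ shows that $\{e\}\cup S^\beta$ is a transversal of $H^\beta$, so $s_i^\beta\notin H^\beta$ and $s_i^\beta (s_j^\beta)^{-1}\notin H^\beta$ for $i\neq j$. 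These nonmembership facts are stated against $H^\beta$, whereas what is needed is the same facts against $gH^\beta g^{-1}$, and the two subgroups agree precisely when $g\in N_G(H^\beta)$. Thus the crux is to show that the $\alpha^\beta$-fixed element $g$ normalizes $H^\beta$ --- equivalently, pulling back through $\beta$, that the $\alpha$-fixed element $\beta^{-1}(g)\in\Fix_\alpha(G)$ normalizes the $\alpha$-invariant perfect code $H$. This normalization is exactly what lets one conclude $H^\beta$ itself, and it is where I expect the genuine difficulty to lie; I would attempt to force it out of the interaction between $\alpha^\beta(g)=g$, the invariance $\alpha^\beta(H^\beta)=H^\beta$, and the two-sided character of the perfect-code transversal recorded in the Remark following Corollary~\ref{cor1}.
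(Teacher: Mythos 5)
Your proposal is not a proof: it halts exactly at the step you yourself identify as the crux, namely that the $\alpha^\beta$-fixed element $g$ normalizes $H^\beta$, and in place of an argument offers only an intention (``I would attempt to force it out\dots''). Moreover, that step cannot be forced out of the stated hypotheses. The interaction you hope to exploit only yields that $gH^\beta g^{-1}$ is again $\alpha^\beta$-invariant (since $\alpha^\beta(g)=g$ and $\alpha^\beta(H^\beta)=H^\beta$), not that it equals $H^\beta$; the transversal facts $s_i^\beta\notin H^\beta$ and $s_i^\beta(s_j^\beta)^{-1}\notin H^\beta$ say nothing about membership in the generally different subgroup $gH^\beta g^{-1}$; and the two-sided character of the transversal recorded in the Remark after Corollary~\ref{cor1} is a property of $S^\beta$ relative to $H^\beta$, with no bearing on $g$. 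So the gap is genuine and, as stated, unfillable.

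What resolves the tension is that the printed statement itself is off, and the paper's own proof shows how: the paper gives no separate argument for Proposition~\ref{pro2.12} at all, saying only that it follows from ``the above two propositions.'' That intended proof is the composition you almost wrote down: by Proposition~\ref{pro2.11}, $H^\beta$ is a perfect code of $GC(G,S^\beta,\alpha^\beta)$; then Proposition~\ref{pro2.10}, applied with the involution $\alpha^\beta$ and $g\in\Fix_{\alpha^\beta}(G)$, gives that $g^{-1}H^\beta g$ is a perfect code of $GC(G,g^{-1}S^\beta g,\alpha^\beta)$. The conclusion of this composition concerns the conjugate $g^{-1}H^\beta g$, not $H^\beta$; the ``$H^\beta$'' in the statement is evidently a slip, and the statement as printed holds only under the extra hypothesis $g\in N_G(H^\beta)$ (automatic, e.g., when $G$ is abelian) --- which is precisely the condition your analysis isolated. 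Your preparatory steps are all correct and are exactly the ingredients of Propositions~\ref{pro2.10} and~\ref{pro2.11}: that $g^{-1}S^\beta g$ is a generalized Cayley subset induced by $\alpha^\beta$, the invariance $\alpha^\beta(H^\beta)=H^\beta$ via Proposition~\ref{pro1}, the counting, and the reduction of transversality to pairwise distinctness of cosets. Had you tracked the conjugate subgroup $g^{-1}H^\beta g$ instead of insisting on $H^\beta$, the two-step composition would have closed the argument immediately; so the right fix is not to prove normalization but to correct the subgroup in the conclusion.
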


Let $\alpha_1$ be an involutory automorphism of group $G_1$ and $\alpha_2$ an involutory automorphism of group $G_2$.
Let $\bar{G}=G_1\times G_2=\{g_1, g_2\mid g_1\in G_1, g_2\in G_2\}$.
Then $\bar{\alpha}$ which defined by $\bar{\alpha}(g_1,g_2)=(\alpha_1(g_1),\alpha_2(g_2))$ is an involutory automorphism of $G$.

\begin{lemma}\label{lem2.13}
$\omega_{\bar{\alpha}}(\bar{G})=\omega_{\alpha_1}(G_1)\times \omega_{\alpha_2}(G_2)$, $\Omega_{\bar{\alpha}}(\bar{G})=(K_{\alpha_1}(G_1)\times K_{\alpha_2}(G_2))\setminus \omega_{\bar{\alpha}}(\bar{G})$.
\end{lemma}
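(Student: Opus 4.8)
The plan is to verify both identities by elementwise computation, exploiting the fact that $\bar\alpha$, inversion, and multiplication all act coordinatewise on $\bar G = G_1\times G_2$. First I would establish the formula for $\omega_{\bar\alpha}(\bar G)$. For an arbitrary element $(g_1,g_2)\in\bar G$ I compute
\begin{equation*}
\bar\alpha\bigl((g_1,g_2)^{-1}\bigr)(g_1,g_2)=\bigl(\alpha_1(g_1^{-1}),\alpha_2(g_2^{-1})\bigr)(g_1,g_2)=\bigl(\alpha_1(g_1^{-1})g_1,\ \alpha_2(g_2^{-1})g_2\bigr).
\end{equation*}
Since the map $(g_1,g_2)\mapsto\bar\alpha((g_1,g_2)^{-1})(g_1,g_2)$ factors as the product of the two coordinate maps $g_i\mapsto\alpha_i(g_i^{-1})g_i$, its image is the Cartesian product of the two images, namely $\omega_{\alpha_1}(G_1)\times\omega_{\alpha_2}(G_2)$. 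This gives the first identity.

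For the second identity I recall that $K_\alpha(G)=\{g\in G\mid\alpha(g)=g^{-1}\}$ denotes the set of elements inverted by $\alpha$, so that by the definition of $\Omega_\alpha(G)$ we have $\Omega_\alpha(G)=K_\alpha(G)\setminus\omega_\alpha(G)$. Thus it suffices to show $K_{\bar\alpha}(\bar G)=K_{\alpha_1}(G_1)\times K_{\alpha_2}(G_2)$ and then remove $\omega_{\bar\alpha}(\bar G)$ using the first identity. For the key equality I observe that $\bar\alpha(g_1,g_2)=(g_1,g_2)^{-1}$ holds if and only if $(\alpha_1(g_1),\alpha_2(g_2))=(g_1^{-1},g_2^{-1})$, which by comparing coordinates is equivalent to the conjunction $\alpha_1(g_1)=g_1^{-1}$ and $\alpha_2(g_2)=g_2^{-1}$, that is, $g_1\in K_{\alpha_1}(G_1)$ and $g_2\in K_{\alpha_2}(G_2)$. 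Hence $K_{\bar\alpha}(\bar G)=K_{\alpha_1}(G_1)\times K_{\alpha_2}(G_2)$, and subtracting $\omega_{\bar\alpha}(\bar G)$ yields the claimed formula for $\Omega_{\bar\alpha}(\bar G)$.

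There is no substantial obstacle here: the whole argument is a coordinatewise verification resting on the product structure of $\bar G$ and the componentwise action of $\bar\alpha$. The only point requiring a little care is to confirm in the first identity that the two coordinates may be chosen independently, so that the image set is genuinely the full Cartesian product rather than some smaller subset; this is immediate because the defining conditions on the two coordinates are decoupled. An analogous decoupling underlies the equivalence $\bar\alpha(g_1,g_2)=(g_1,g_2)^{-1}\Longleftrightarrow(\alpha_1(g_1)=g_1^{-1}\text{ and }\alpha_2(g_2)=g_2^{-1})$ used for $K_{\bar\alpha}(\bar G)$, so once that is spelled out the result follows.
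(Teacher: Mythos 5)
Your proof is correct and follows essentially the same coordinatewise computation as the paper: both arguments observe that $\bar\alpha$ and inversion act componentwise, so $\omega_{\bar\alpha}(\bar G)$ is the product of the images and the condition $\bar\alpha(g_1,g_2)=(g_1,g_2)^{-1}$ decouples into the two coordinate conditions defining $K_{\alpha_1}(G_1)\times K_{\alpha_2}(G_2)$. Your explicit remark that the two coordinates vary independently (so the image is the full Cartesian product) is a useful clarification that the paper leaves implicit.
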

\begin{proof}
For any $g_1\in G_1$ and $g_2\in G_2$,
\begin{eqnarray*}
\omega_{\bar{\alpha}}(\bar{G})
&=&\{\bar{\alpha}(g_1, g_2)^{-1}(g_1, g_2)\mid g_1\in G_1, g_2\in G_2 \}\\
&=&\{\bar{\alpha}(g_1^{-1})g_1, \alpha(g_2^{-1})g_2 \mid g_1\in G_1, g_2\in G_2\}\\
&=&\omega_{\alpha_1}(G_1)\times \omega_{\alpha_2}(G_2)
\end{eqnarray*}
and
\begin{eqnarray*}
\Omega_{\bar{\alpha}}(\bar{G})
&=&\{(g_1, g_2)\mid \alpha(g_1, g_2)=(g_1^{-1}, g_2^{-1}), ~\text{where}~ g_1\in G_1, g_2\in G_2 ~\text{and} (g_1, g_2)\notin \omega_{\bar{\alpha(G)}}\}\\
&=&(K_{\alpha_1}(G_1)\times K_{\alpha_2}(G_2))\setminus \omega_{\bar{\alpha}}(\bar{G}).
\end{eqnarray*}
\end{proof}

Let  $S_1=\{s_{11}, \cdots, s_{1i}\}$ be generalized Cayley subset of $G_1$  induced by $\alpha_1$ and $S_2=\{s_{21}, \cdots, s_{2j}\}$ a generalized Cayley subset of $G_2$  induced by $\alpha_2$.
Then $T_1=S_1\times S_2$ is generalized Cayley subset of $\bar{G}$ induced by $\bar{\alpha}$.
The details can be  referred for \cite{A.K.P.A}.
In addition, let $T_2=S_1\times S_2 \cup \{e\}\times S_2 \cup S_1\times \{e\}$.
By Lemma \ref{lem2.13},
 $T_2\cap \omega_{\bar{\alpha}}(\bar{G})=\emptyset$ and
\begin{eqnarray*}
\bar{\alpha}(T_2)
&=&\alpha_1(S_1)\times \alpha_2(S_2) \cup \{e\}\times \alpha_2(S_2) \cup \alpha_1(S_1)\times \{e\}\\
&=&S_1^{-1} \times S_2^{-1}\cup \{e\}\times S_2^{-1} \cup S_1^{-1}\times \{e\}\\
&=&(S_1, S_2)^{-1}\cup (\{e\}\times S_2)^{-1}\cup (S_1\times \{e\})^{-1}\\
&=& T_2^{-1}.
\end{eqnarray*}
Thus, $T_2$ is also also a generalized Cayley subset of $\bar{G}$ induced by $\bar{\alpha}$.
Moreover, we find the relationship between the subgroup perfect codes of $G_i(i=1,2)$ and $\bar{G}$.

\begin{proposition}
Let $H_1$ be a subgroup perfect code of $GC(G_1, S_1, \alpha_1)$ and $H_2$  a a subgroup perfect code of $GC(G_2, S_2, \alpha_2)$.
Then $H_1\times H_2$ is subgroup perfect code of $GC(\bar{G}, T_2, \bar{\alpha})$.
\end{proposition}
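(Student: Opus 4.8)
The plan is to verify the two conditions of Corollary \ref{cor1} for the subgroup $H_1\times H_2$ together with the subset $T_2$ of $\bar{G}$; note that the text immediately preceding this proposition already establishes that $T_2$ is a generalized Cayley subset of $\bar{G}$ induced by $\bar{\alpha}$, so that hypothesis of the corollary is in place. First I would apply Corollary \ref{cor1} to each factor: since $H_1$ is a perfect code of $GC(G_1,S_1,\alpha_1)$ we obtain $\alpha_1(H_1)=H_1$ and that $\{e\}\cup S_1$ is a set of coset representatives of $H_1$ in $G_1$, and likewise $\alpha_2(H_2)=H_2$ with $\{e\}\cup S_2$ a set of coset representatives of $H_2$ in $G_2$. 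The $\bar{\alpha}$-invariance condition for $H_1\times H_2$ is then immediate, because $\bar{\alpha}(H_1\times H_2)=\alpha_1(H_1)\times\alpha_2(H_2)=H_1\times H_2$.

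The remaining condition asks that $\{(e,e)\}\cup T_2$ be a set of coset representatives of $H_1\times H_2$ in $\bar{G}$. The key observation is the set identity
\begin{equation*}
\{(e,e)\}\cup T_2=(\{e\}\cup S_1)\times(\{e\}\cup S_2),
\end{equation*}
which one checks by expanding the right-hand product into its four disjoint pieces $\{(e,e)\}$, $\{e\}\times S_2$, $S_1\times\{e\}$, and $S_1\times S_2$; disjointness uses $e\notin S_i$, which holds because $e\in\omega_{\alpha_i}(G_i)$ while $S_i\cap\omega_{\alpha_i}(G_i)=\emptyset$. Thus it suffices to prove that the Cartesian product of a transversal of $H_1$ in $G_1$ with a transversal of $H_2$ in $G_2$ is a transversal of $H_1\times H_2$ in $\bar{G}$.

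For this last step I would argue from uniqueness of coset decompositions. Writing $R_1=\{e\}\cup S_1$ and $R_2=\{e\}\cup S_2$, each $g_i\in G_i$ has a unique expression $g_i=h_it_i$ with $h_i\in H_i$ and $t_i\in R_i$, so any $(g_1,g_2)\in\bar{G}$ equals $(h_1,h_2)(t_1,t_2)$ with $(h_1,h_2)\in H_1\times H_2$ and $(t_1,t_2)\in R_1\times R_2$, this representation being unique coordinatewise. One may corroborate the count via $[\bar{G}:H_1\times H_2]=[G_1:H_1][G_2:H_2]=|R_1||R_2|=|R_1\times R_2|$. Hence $\{(e,e)\}\cup T_2$ meets every coset of $H_1\times H_2$ exactly once, and by the converse direction of Corollary \ref{cor1} the subgroup $H_1\times H_2$ is a perfect code of $GC(\bar{G},T_2,\bar{\alpha})$. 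There is no serious obstacle here; the only points deserving care are pinning down the set identity for $\{(e,e)\}\cup T_2$ and confirming that the product of the two transversals is genuinely a transversal of the product group, a standard fact that I would settle through the coordinatewise uniqueness above.
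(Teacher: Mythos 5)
Your proposal is correct and follows essentially the same route as the paper: both arguments reduce to Corollary \ref{cor1} by observing that $\{(e,e)\}\cup T_2=(\{e\}\cup S_1)\times(\{e\}\cup S_2)$ is a transversal of $H_1\times H_2$ in $\bar{G}$, obtained by multiplying out the coset decompositions of $G_1$ and $G_2$. Your version merely spells out the $\bar{\alpha}$-invariance of $H_1\times H_2$ and the coordinatewise uniqueness, which the paper leaves implicit.
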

\begin{proof}
Since $G_1=H_1\cup H_1s_{11}\cup \cdots \cup  H_1s_{1i}$ and $G_2=H_2\cup H_2s_{21}\cup \cdots \cup  H_2s_{2j}$, we have
\begin{eqnarray*}
\bar{G}&=&G_1\times G_2=(H_1\cup H_1s_{11}\cup \cdots \cup  H_1s_{1i})\times (H_2\cup H_2s_{21}\cup \cdots \cup  H_2s_{2j})\\
&=&(H_1\times H_2)\cdot (\{e\}\cup \{e\}\times S_2\cup S_1\times \{e\}\cup S_1\times S_2)\\
&=&(H_1\times H_2)\cdot (\{e\}\cup T_2).
\end{eqnarray*}
By Corollary \ref{cor1}, $H_1\times H_2$ is perfect code of $GC(\bar{G}, T_2, \bar{\alpha})$.
\end{proof}

\begin{proposition}
Let $H_1$ be a subgroup perfect code of $GC(G_1, S_1, \alpha_1)$ and $H_2$  a a subgroup perfect code of $GC(G_2, S_2, \alpha_2)$.
Then $H_1\times H_2$ is  subgroup total perfect code of $GC(\bar{G}, T_1, \bar{\alpha})$.
\end{proposition}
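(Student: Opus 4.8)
The plan is to first isolate the subgroup version of a \emph{total} perfect code, in direct parallel with Corollary~\ref{cor1}. Running the arguments of Lemmas~\ref{lem1}--\ref{lem4} but now demanding that \emph{every} vertex of $GC(G,S,\alpha)$ -- including the vertices of the candidate set itself -- have exactly one neighbour in it, I expect to obtain the criterion: a subgroup $K$ with $\alpha(K)=K$ is a total perfect code of $GC(G,S,\alpha)$ if and only if the translates $\{Ks : s\in S\}$ partition $G$, i.e.\ $S$ is a complete set of right coset representatives of $K$ meeting every coset exactly once. The only change from Corollary~\ref{cor1} is that here $S$ alone (not $\{e\}\cup S$) must exhaust the cosets, because the vertices lying in $K$ must now also receive their unique neighbour from within $K$; the ``at most one neighbour'' half is still governed by Lemma~\ref{lem1}.

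With this criterion I would apply it to $\bar{K}=H_1\times H_2$, connection set $T_1=S_1\times S_2$, and $\bar{\alpha}$. The fixed-set condition is immediate, since $\bar{\alpha}(H_1\times H_2)=\alpha_1(H_1)\times\alpha_2(H_2)=H_1\times H_2$ by Proposition~\ref{pro1}. For the partition condition I would compute, for each $(s_1,s_2)\in T_1$,
\[
(H_1\times H_2)(s_1,s_2)=H_1 s_1\times H_2 s_2,
\]
and examine the family of such products as $(s_1,s_2)$ ranges over $S_1\times S_2$. Pairwise disjointness is routine: by Corollary~\ref{cor1} distinct elements of $S_k$ lie in distinct cosets of $H_k$, so distinct pairs yield disjoint products $H_1 s_1\times H_2 s_2$; combined with $\bar{\alpha}(\bar{K})=\bar{K}$ this is exactly the matching (``at most one neighbour'') half supplied by Lemma~\ref{lem1}.

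The step I expect to be the genuine obstacle is the covering, namely verifying $\bigcup_{(s_1,s_2)\in T_1}(H_1 s_1\times H_2 s_2)=\bar{G}$. Since this union equals $(H_1S_1)\times(H_2S_2)$, the task reduces to establishing $H_kS_k=G_k$ for $k=1,2$, i.e.\ that each family $\{H_k s : s\in S_k\}$ already tiles $G_k$. This is precisely the assertion that $S_k$ is a \emph{complete} right transversal of $H_k$ rather than a transversal of the non-identity cosets alone, so I would focus the proof on securing this exhaustion on each factor: once $\{H_k s : s\in S_k\}$ covers $G_k$, the product tiling of $\bar{G}$ is automatic and the total-perfect-code criterion is met, whereas if the identity coset is omitted on a factor then the coset $H_1\times H_2$ is left uncovered and its vertices acquire no neighbour. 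This covering requirement is therefore the hinge of the whole argument and the point I would scrutinise most carefully.
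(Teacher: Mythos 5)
Your plan correctly reduces the claim to the right criterion (a subgroup $K$ is a total perfect code of $GC(G,S,\alpha)$ exactly when $\{\alpha(K)s : s\in S\}$ partitions $G$, as the paper records in Section~3 --- note only that the relevant condition is on $\alpha(K)$, not on $K$ with $\alpha(K)=K$, though here $\bar{\alpha}(H_1\times H_2)=H_1\times H_2$ anyway), and you correctly isolate the covering condition $H_kS_k=G_k$ as the hinge. But you stop exactly where the argument breaks: that condition is \emph{not} supplied by the hypothesis. Corollary~\ref{cor1} says that $\{e\}\cup S_k$, not $S_k$ alone, is a right transversal of $H_k$, so $H_kS_k=G_k\setminus H_k$ and
\[
\bigcup_{(s_1,s_2)\in T_1}(H_1\times H_2)(s_1,s_2)=(G_1\setminus H_1)\times(G_2\setminus H_2),
\]
which omits $H_1\times G_2$ and $G_1\times H_2$ entirely; every vertex of $H_1\times G_2$ has no neighbour at all in $H_1\times H_2$. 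A count makes the failure unavoidable: a total perfect code $C$ in an $r$-regular graph must satisfy $|G|=|C|\,r$, but here $|T_1|=ij$ while $[\bar{G}:H_1\times H_2]=(i+1)(j+1)$, so $T_1$ can never be a right transversal of $\bar{\alpha}(H_1\times H_2)$.

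Consequently the proposal cannot be completed, and the proposition as stated (for which the paper itself supplies no proof) is false. A concrete counterexample: $G_1=G_2=\mathbb{Z}_4$ with $\alpha_k$ the inversion automorphism, $H_k=\{0,2\}=\omega_{\alpha_k}(G_k)$ and $S_k=\{1\}$; then $H_k$ is a perfect code of $GC(G_k,S_k,\alpha_k)$, $T_1=\{(1,1)\}$, and the vertex $(0,0)$ has the single neighbour $(1,1)\notin H_1\times H_2$. The statement becomes correct if one instead assumes that each $H_k$ is a \emph{total} perfect code of $GC(G_k,S_k,\alpha_k)$: then $S_k$ is a right transversal of $\alpha_k(H_k)$, hence $T_1=S_1\times S_2$ is a right transversal of $\bar{\alpha}(H_1\times H_2)$, and your product-tiling argument goes through verbatim.
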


For abelian group, the  equivalent condition of a subgroup to be a perfect code is discussed in the following.

\begin{proposition}\label{pro2}
Let $G$ be an abelian group and $H$ a subgroup of $G$ with odd order.
If there exists an involutory automorphism $\alpha$ such that $H\leq \omega_\alpha(G)$, then $H$ is perfect code of generalized Cayley graph induced by $\alpha$ if and only if $H=\omega_\alpha(G)$.
\end{proposition}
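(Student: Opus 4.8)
The plan is to reduce everything to Corollary~\ref{cor1}, which characterizes $H$ as a perfect code of some $GC(G,S,\alpha)$ by the two conditions $\alpha(H)=H$ and ``$\{e\}\cup S$ is a set of coset representatives of $H$'' for a generalized Cayley subset $S$. First I would record two standing facts for abelian $G$: the map $g\mapsto\alpha(g^{-1})g$ is a homomorphism, so $\omega_\alpha(G)$ is a subgroup; and $\alpha$ inverts each of its elements, since $\alpha(\alpha(g^{-1})g)=(\alpha(g^{-1})g)^{-1}$. As $H\le\omega_\alpha(G)$, this gives $\alpha(h)=h^{-1}$ for every $h\in H$, so $\alpha(H)=H$ holds automatically and the only real content is the transversal condition. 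It is convenient to set $L=\{g\in G:\alpha(g)=g^{-1}\}=G\setminus\mho_\alpha(G)$, a subgroup with $H\le\omega_\alpha(G)\subseteq L$, and to note that $L$ is exactly the fixed-point set of the set-involution $\tau(g)=\alpha(g^{-1})$.

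For the forward direction ($H$ perfect code $\Rightarrow H=\omega_\alpha(G)$) I would argue by counting representatives. Given a witnessing $S$, the set $\{e\}\cup S$ is a transversal with $S\cap\omega_\alpha(G)=\emptyset$; since $e\in H$, the coset $H$ is represented by $e$ and $S\cap H=\emptyset$. Because $\omega_\alpha(G)$ is a subgroup containing $H$, it is a disjoint union of $H$-cosets, each carrying a unique representative from $\{e\}\cup S$. Any such coset other than $H$ would have its representative in $S\cap\omega_\alpha(G)=\emptyset$, which is impossible; hence $H$ is the only $H$-coset inside $\omega_\alpha(G)$, i.e.\ $\omega_\alpha(G)=H$. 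This direction uses only commutativity, not the parity of $|H|$.

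For the converse, assume $H=\omega_\alpha(G)$. Writing $S=T\setminus\{e\}$, the requirement $\alpha(S)=S^{-1}$ is equivalent to $T$ being invariant under $\tau$, and $S\cap\omega_\alpha(G)=\emptyset$ is automatic once $e$ represents $H$. So the task reduces to producing a $\tau$-invariant transversal $T$ of $H$ with $e\in T$. I would pass to the induced involution $\bar\tau$ on $G/H$ (well defined since $\tau(gh)\in\tau(g)H$ for $h\in H$): on each $\bar\tau$-orbit $\{C,\bar\tau C\}$ of size two, choose any representative $t$ of $C$ and take $\tau(t)$ for $\bar\tau C$, while on a $\bar\tau$-fixed coset one must instead pick a $\tau$-fixed representative, i.e.\ an element of $L$. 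The crux is therefore the claim that every $\bar\tau$-fixed coset contains an element of $L$.

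I expect this claim to be the main obstacle, and I would settle it using the odd-order hypothesis. A coset $gH$ is $\bar\tau$-fixed iff $\alpha(g^{-1})\in gH$, which rearranges (by commutativity) to $g\alpha(g)\in H$. Now $g\alpha(g)$ is always fixed by $\alpha$, so it lies in $\Fix_\alpha(G)$; as it also lies in $H\le L$, it lies in $\Fix_\alpha(G)\cap L$, every element of which squares to $e$. Since $|H|$ is odd, the only element of $H$ of order dividing $2$ is $e$, forcing $g\alpha(g)=e$, i.e.\ $g\in L$. Thus each $\bar\tau$-fixed coset is contained in $L$ and in particular admits a $\tau$-fixed representative; taking $e$ for the coset $H$, the construction produces the desired $\tau$-invariant transversal $T$, and Corollary~\ref{cor1} then certifies that $H=\omega_\alpha(G)$ is a perfect code. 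The odd-order assumption enters precisely here, to eliminate the possible $2$-torsion obstruction in $\Fix_\alpha(G)\cap L$.
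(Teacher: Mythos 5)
Your proof is correct and follows essentially the same route as the paper: the necessity direction is the same counting of $H$-cosets inside the subgroup $\omega_\alpha(G)$ against the constraint $S\cap\omega_\alpha(G)=\emptyset$, and your sufficiency construction via the induced involution $\bar\tau$ on $G/H$ is exactly the paper's pairing of cosets into self-paired ones (given representatives in $\Omega_\alpha(G)$) and swapped pairs $\{Hy, H\alpha(y^{-1})\}$, with the odd-order hypothesis used at the identical spot to kill the $2$-torsion element $g\alpha(g)\in\Fix_\alpha(G)\cap H$. Your write-up is, if anything, a cleaner and more explicit organization of the same argument.
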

\begin{proof}
Necessity.
Assume $H\subset \omega_\alpha(G)$ and choose $g\in \omega_\alpha(G)\setminus H$.
Then each element in $Hg$ belongs to $\omega_\alpha(G)$.
By Corollary \ref{cor1}, $H$ is not perfect code of any generalized Cayley graph
induced by $\alpha$.

Sufficiency.
Let $H=\omega_\alpha(G)$.
For any nontrivial coset $Hg$, $g\in \Omega_\alpha(G)\cup \mho_\alpha(G)$.
In fact, if $g\in \mho_\alpha(G)$, then $Hg\neq H\alpha(g^{-1})$.
Otherwise, $\alpha(g)g\in H$, this yields that $\alpha(\alpha(g)g)=\alpha(g)g=(\alpha(g)g)^{-1}$.
Thus, $\alpha(g)g$ is an involution in $H$, which is contrary to the order of $H$ is odd.
In addition,  for any $g_1\in \Omega_\alpha(G)$ and $g_2\in \mho_\alpha(G)$,
since $Hg_1=H\alpha(g_1)^{-1}$ and $Hg_2\neq H\alpha(g_2^{-1})$, we have $Hg_1\neq Hg_2$.
Therefore, there exists a set of coset representative of $H$ with form $\{e\}\cup \{s_1, \cdots, s_k\}\cup \{t_1, \alpha(t_1^{-1}), \cdots, t_\ell, \alpha(t_\ell^{-1})\}$,  where $s_i\in \Omega_\alpha(G)$ for $1\leq i\leq k$ and $t_j\in \mho_\alpha(G)$ for $1\leq j\leq \ell$.
$S=\{s_1, \cdots, s_k\}\cup \{t_1, \alpha(t_1^{-1}), \cdots, t_\ell, \alpha(t_\ell^{-1})\}$.
As $\alpha(H)=H$ and $\{e\}\cup S$ is generalized Cayley coset representative of $H$ in $G$, $H$ is perfect code of $GC(G, S, \alpha)$.
\end{proof}

\begin{theorem}\label{thm3}
Let $G$ be an abelian group and $H$ a subgroup of $G$.
Let $\alpha$ be an involutory automorphism of $G$.
Then $H$ is perfect code of generalized Cayley graph induced by $\alpha$ if and only if $\alpha(H)=H$ and for any $g\in G\setminus H$, $\alpha(g)g\in H$ implies there exists an element $h\in H$ such that $hg\in \Omega_\alpha(G)$.
\end{theorem}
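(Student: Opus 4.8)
The plan is to reduce everything to Corollary~\ref{cor1}, which says that $H$ is a perfect code of $GC(G,S,\alpha)$ exactly when $\alpha(H)=H$ and $\{e\}\cup S$ is a set of coset representatives of $H$. Since ``$H$ is a perfect code of some generalized Cayley graph induced by $\alpha$'' just means such an $S$ exists, the whole problem becomes: given $\alpha(H)=H$, decide when one can pick one representative from each nontrivial coset so that the resulting set $S$ satisfies $S\cap\omega_\alpha(G)=\emptyset$ and $\alpha(S)=S^{-1}$. The unifying device is the involution $\phi\colon G\to G$, $\phi(x)=\alpha(x^{-1})=\alpha(x)^{-1}$, which is an involution because $\alpha^2=\id$ and $G$ is abelian. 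The condition $\alpha(S)=S^{-1}$ says precisely that $S$ is closed under $\phi$, and the fixed points of $\phi$ are exactly the elements $x$ with $\alpha(x)=x^{-1}$, that is $\omega_\alpha(G)\cup\Omega_\alpha(G)$, while $\mho_\alpha(G)$ is the set of non-fixed points. A quick check gives the two facts I will use repeatedly: every element of $\omega_\alpha(G)$ is $\phi$-fixed, so $\omega_\alpha(G)\cap\mho_\alpha(G)=\emptyset$; and $\phi$ maps $\mho_\alpha(G)$ into itself.

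First I would push $\phi$ down to $G/H$. Because $\alpha(H)=H$ and $G$ is abelian, $\phi$ induces a well-defined involution $\bar\phi$ on cosets by $\bar\phi(Hg)=H\phi(g)$, and a direct computation shows $\bar\phi(Hg)=Hg$ if and only if $\alpha(g)g\in H$. Thus the nontrivial cosets split into $\bar\phi$-fixed cosets (those $Hg$ with $\alpha(g)g\in H$) and $\bar\phi$-swapped pairs $\{Hg,H\phi(g)\}$ with $Hg\neq H\phi(g)$. The key observation is that a $\phi$-closed transversal must respect this splitting: on a swapped pair one is free to choose a representative $s\in Hg$ and is then forced to take $\phi(s)\in H\phi(g)$, whereas on a $\bar\phi$-fixed coset the unique chosen representative $s$ satisfies $\phi(s)\in S\cap Hg=\{s\}$, so $s$ must itself be $\phi$-fixed.

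For necessity, assume such an $S$ exists and let $Hg$ be a nontrivial coset with $\alpha(g)g\in H$. Its representative $s\in S$ is $\phi$-fixed by the previous paragraph, so $\alpha(s)=s^{-1}$; since $s\notin\omega_\alpha(G)$ this forces $s\in\Omega_\alpha(G)$, and writing $s=hg$ with $h\in H$ produces the required $h$ with $hg\in\Omega_\alpha(G)$. For sufficiency, assume the stated coset condition and build $S$ pair by pair. On each swapped pair $\{Hg,H\phi(g)\}$ I would take $s=g$ together with $\phi(g)$: here $\alpha(g)g\notin H$ forces $\alpha(g)\neq g^{-1}$, so $g\in\mho_\alpha(G)$ and hence both $g$ and $\phi(g)$ lie in $\mho_\alpha(G)$ and avoid $\omega_\alpha(G)$. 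On each $\bar\phi$-fixed nontrivial coset $Hg$ I would use the hypothesis to pick $hg\in\Omega_\alpha(G)$, a $\phi$-fixed element disjoint from $\omega_\alpha(G)$. The resulting $S$ contains exactly one element of each nontrivial coset, is closed under $\phi$ (so $\alpha(S)=S^{-1}$), and misses $\omega_\alpha(G)$; together with $\alpha(H)=H$, Corollary~\ref{cor1} then gives that $H$ is a perfect code of $GC(G,S,\alpha)$.

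The only place demanding genuine care is the bookkeeping that keeps the sufficiency construction consistent: one must verify that the swapped pairs and $\bar\phi$-fixed cosets partition the nontrivial cosets, that $\phi(g)$ really lands in the paired coset $H\phi(g)$, and that no coset receives two representatives. All of these follow from $\bar\phi$ being an involution on $G/H$ and from the two elementary facts about $\phi$, $\omega_\alpha(G)$ and $\mho_\alpha(G)$ recorded above, so I expect no serious obstacle beyond organizing these verifications cleanly.
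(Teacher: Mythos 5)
Your proposal is correct and follows essentially the same route as the paper: both reduce to Corollary~\ref{cor1}, split the nontrivial cosets into those fixed by $Hg\mapsto H\alpha(g^{-1})$ (your $\bar\phi$-fixed cosets, the paper's set $I$) and swapped pairs (the paper's set $J$), and build $S$ by choosing an $\Omega_\alpha(G)$-representative on each fixed coset and a pair $\{g,\alpha(g^{-1})\}$ on each swapped pair. Your explicit use of the involution $\phi$ is only a notational repackaging of the paper's argument.
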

\begin{proof}
Assume  $H$ is perfect code of graph $GC(G, S, \alpha)$, where $S=\{s_1, \cdots, s_r\}$.
By Proposition \ref{pro1}, we have $\alpha(H)=H$ directly.
Now assume $\alpha(g)g\in H$ for $g\in G\setminus H$ and $g\in Hs_i$.
Then we obtain $Hs_i=Hg=H\alpha(g^{-1})=H\alpha(s_i^{-1})$.
Recall that both $s_i$ and $\alpha(s_i^{-1})$ belong to $S$ and $\{e\}\cup S$ is a set of right coset representatives of $H$, it follows that
$s_i=\alpha(s_i^{-1})$.
Since $Hg=Hs_i$, there exists $h\in H$ such that $hg=s_i$ and then $\alpha(hg)hg=e$.

Conversely, assume that  $\alpha(H)=H$ and for any $g\in G\setminus H$, $\alpha(g)g\in H$ implies there exists an element $h\in H$ such that $hg\notin \omega_\alpha(G)$ and $\alpha(hg)hg=e$.
Let $\Phi$ denote the set of elements of  $G/H$  not including $H$, and define two subsets of $\Phi$ as follows:
$I=\{Hx_i\mid Hx_i\in \Phi , Hx_i=H\alpha(x_i^{-1})\}$, $J=\Phi\setminus I$.
Assume $I=\{Hx_1, \cdots, Hx_\ell\}$.
For any $Hx_i\in I$, $\alpha(x_i)x_i\in H$, which gives that there exists $h_i\in H$ such that $h_ix_i\notin \omega_\alpha(G)$ and $\alpha(h_ix_i)h_ix_i=e$.
Let $z_i=h_ix_i$, then $\alpha(z_i)=z_i^{-1}$ and $I=\{Hz_1, \cdots, Hz_\ell\}$.
For any $Hy_j\in J$, note that  $H\alpha(y_j^{-1})\neq Hy_j$ and $H\alpha(y_j^{-1})\in J$.
Without loss of generality, assume that $J=\{Hy_1, H\alpha(y_1^{-1}), \cdots, Hy_t, H\alpha(y_t^{-1})\}$.
Let $S=\{z_1, \cdots, z_\ell, y_1, \alpha(y_1^{-1}), \cdots, y_t, \alpha(y_t^{-1})\}$.
It is easy to see $S$ is a generalized Cayley subset induced by $\alpha$ and $\{e\}\cup S$ is a set of right coset representatives of $H$.
By Corollary \ref{cor1}, $H$ is subgroup perfect code of $GC(G, S, \alpha)$.
\end{proof}

\begin{remark}
If $H\leq\omega_\alpha(G)$, the order of $H$ is odd, and $H$ is perfect code induced by $\alpha$, by \ref{thm3}, we obtain $H=\omega_\alpha(G)$.
This follows from if  $H\subset\omega_\alpha(G)$, choose $g\in \omega_\alpha(G)\setminus H$, then $\alpha(g)g=e\in H$. However, for any element $h\in H$,  $hg\in \omega_\alpha(G)$, this is contradiciton.
Conversely, assume $H=\omega_\alpha(G)$, then for any element $g\in G$ satisfying $\alpha(g)g\in H$, $\alpha(g)g=(\alpha(g)g)^{-1}$ holds, that is, $(\alpha(g)g)^2=e$.
As  a result, $\alpha(g)g=e$, this implies that $h\in \Omega_\alpha(G)$.
By \ref{thm3} again, $H$ is perfect code induced  by $\alpha$.
Therefore,  Proposition \ref{pro2} can be viewed as a corollary of Theorem \ref{thm3}.
\end{remark}

Based on Theorem \ref{thm3}, there are several corollaries.

\begin{corollary}\label{cor2}
Let $G$ be an abelian group and $H$ is a characteristic subgroup of $G$.
If $H\subseteq \omega_\alpha(G)$, then $H$ is perfect code with respect to $\alpha$ if and only if $H=\omega_\alpha(G)$ and for any $g\in G$, $\alpha(g)g\in H$ implies $\alpha(g)g=e$.
\end{corollary}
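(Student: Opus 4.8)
The plan is to read this off from Theorem \ref{thm3}, exploiting two simplifications that come from the abelian, characteristic hypotheses. Since $H$ is characteristic it is invariant under every automorphism of $G$, so $\alpha(H)=H$ holds automatically and the first clause of Theorem \ref{thm3} is free; the entire statement ``$H$ is a perfect code'' therefore collapses to the single implication in that theorem, namely that for every $g\in G\setminus H$ with $\alpha(g)g\in H$ there exists $h\in H$ with $hg\in\Omega_\alpha(G)$. The second simplification is that, because $G$ is abelian, the map $x\mapsto\alpha(x^{-1})x$ is a group homomorphism, so $\omega_\alpha(G)$ is in fact a \emph{subgroup} of $G$; moreover a one-line computation shows that every $w\in\omega_\alpha(G)$ satisfies $\alpha(w)=w^{-1}$, equivalently $\alpha(w)w=e$. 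I would record these two facts as a short preliminary, since they drive both directions of the proof.

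For sufficiency I would assume $H=\omega_\alpha(G)$ together with the stated implication and check the condition of Theorem \ref{thm3}. Given $g\in G\setminus H$ with $\alpha(g)g\in H$, the hypothesis forces $\alpha(g)g=e$, i.e.\ $\alpha(g)=g^{-1}$; since $g\notin H=\omega_\alpha(G)$, this is exactly the statement that $g\in\Omega_\alpha(G)$, so taking $h=e$ verifies the condition and Theorem \ref{thm3} gives that $H$ is a perfect code.

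For necessity, assume $H$ is a perfect code. To obtain $H=\omega_\alpha(G)$ I argue by contradiction: if some $g\in\omega_\alpha(G)\setminus H$ existed, then $\alpha(g)g=e\in H$, so Theorem \ref{thm3} would produce $h\in H$ with $hg\in\Omega_\alpha(G)$, in particular $hg\notin\omega_\alpha(G)$; but $h,g\in\omega_\alpha(G)$ and $\omega_\alpha(G)$ is a subgroup, whence $hg\in\omega_\alpha(G)$, a contradiction. With $H=\omega_\alpha(G)$ established, I verify the implication by cases on $g\in G$ with $\alpha(g)g\in H$. If $g\in H=\omega_\alpha(G)$, then $\alpha(g)=g^{-1}$ gives $\alpha(g)g=e$ at once. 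If $g\notin H$, Theorem \ref{thm3} yields $h\in H$ with $hg\in\Omega_\alpha(G)$, so $\alpha(hg)=(hg)^{-1}$, i.e.\ $\alpha(hg)(hg)=e$; expanding by commutativity this reads $(\alpha(h)h)(\alpha(g)g)=e$, and since $h\in\omega_\alpha(G)$ gives $\alpha(h)h=e$, the factor $\alpha(h)h$ drops out and leaves $\alpha(g)g=e$.

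The step I expect to need the most care is this last cancellation: Theorem \ref{thm3} only delivers the qualitative membership $hg\in\Omega_\alpha(G)$, and the work lies in upgrading it to the sharp equality $\alpha(g)g=e$. The mechanism is precisely the expansion $\alpha(hg)(hg)=(\alpha(h)h)(\alpha(g)g)$, together with $\alpha(h)=h^{-1}$ for $h\in\omega_\alpha(G)$, which isolates the $g$-part; everything hinges on the two abelian-specific facts that $\omega_\alpha(G)$ is a subgroup and that $\alpha$ inverts each of its elements, so I would prove those carefully before assembling the two directions above.
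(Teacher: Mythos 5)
Your proof is correct and follows exactly the route the paper intends: the paper states Corollary \ref{cor2} without proof as a consequence of Theorem \ref{thm3}, and the remark following that theorem runs the identical argument (choosing $g\in\omega_\alpha(G)\setminus H$ with $\alpha(g)g=e\in H$ and using that $\omega_\alpha(G)$ is a subgroup inverted elementwise by $\alpha$ in the abelian case) for the special case of Proposition \ref{pro2}. Your careful handling of the two preliminary facts and the cancellation $\alpha(hg)(hg)=(\alpha(h)h)(\alpha(g)g)$ supplies the details the paper leaves implicit.
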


\begin{corollary}\label{cor4}
Let $G$ be an abelian group and $H$ is a  subgroup of $G$ with  odd order.
Then $H$ is subgroup perfect code of $G$.
In particular, if $H$ is perfect code of generalized Cayley graph induced by $\iota$, then $H=\omega_\alpha(G)$.
\end{corollary}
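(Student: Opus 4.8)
The plan is to treat the two assertions separately, using Theorem~\ref{thm3} for the existence statement and Proposition~\ref{pro2} for the refinement concerning $\iota$. The only genuine content is two elementary facts about odd order, so the write-up should isolate those and then quote the established results.

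\textbf{The existence statement.} First I would record that, since $\lvert H\rvert$ is odd, the squaring map $x\mapsto x^{2}$ is a bijection of $H$: it is a homomorphism (as $G$ is abelian), and its kernel consists of the elements of order dividing $2$, which is trivial because an odd-order group has no involution; a finite injective map is onto. I would then apply Theorem~\ref{thm3} with $\alpha=\id$ (which satisfies $\alpha^{2}=\id$ and fixes $H$ setwise, the degenerate Cayley case). For this $\alpha$ the hypothesis $\alpha(g)g\in H$ reads $g^{2}\in H$, and one must produce $h\in H$ with $hg\in\Omega_{\id}(G)$, i.e. with $(hg)^{2}=e$ and $hg\neq e$. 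Putting $k=g^{2}\in H$ and using surjectivity of squaring on $H$, pick $h\in H$ with $h^{2}=k^{-1}$; then $(hg)^{2}=h^{2}g^{2}=e$, while $hg\neq e$ since $g\notin H$. This verifies the criterion of Theorem~\ref{thm3}, so $H$ is a subgroup perfect code of $G$.

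\textbf{The refinement for $\iota$.} Here the decisive observation is that $H\le\omega_{\iota}(G)$. Indeed $\omega_{\iota}(G)=\{\iota(g^{-1})g\mid g\in G\}=\{g^{2}\mid g\in G\}$ is exactly the subgroup of squares, and every $h\in H$ has odd order, so choosing $k$ with $2k\equiv 1$ modulo the order of $h$ gives $h=(h^{k})^{2}$; hence $h$ is a square and $H\subseteq\omega_{\iota}(G)$. Now $\iota$ is an involutory automorphism of the abelian group $G$, $\lvert H\rvert$ is odd, and $H\le\omega_{\iota}(G)$, so Proposition~\ref{pro2} applies directly and yields that $H$ is a perfect code of a generalized Cayley graph induced by $\iota$ if and only if $H=\omega_{\iota}(G)$. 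The forward implication is exactly the ``in particular'' statement (with $\omega_{\alpha}(G)$ in the statement to be read as $\omega_{\iota}(G)$).

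\textbf{Where the work lies.} Because both halves reduce to earlier results, the substance is the two odd-order facts above: surjectivity of squaring on $H$ for the existence part, and membership of odd-order elements in $G^{2}$ for the $\iota$-part. The one point demanding care is the role of $\alpha$. The existence statement genuinely requires the choice $\alpha=\id$: no nonidentity involution need work. For instance, in $G=\mathbb{Z}_{9}$ with $H=\langle 3\rangle$ the only nonidentity involutory automorphism is $\iota$, and by Proposition~\ref{pro2} it produces a perfect code only when $H=\omega_{\iota}(G)=G$, which fails here. The same example explains why the second assertion must single out $\iota$: being a perfect code for that particular involution is strictly stronger than being a subgroup perfect code, and the gap is precisely measured by the condition $H=\omega_{\iota}(G)$.
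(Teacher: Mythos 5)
Your handling of the ``in particular'' clause is exactly the paper's argument: odd order makes every element of $H$ a square of an element of $H$, so $H\le\omega_\iota(G)$, and Proposition~\ref{pro2} then forces $H=\omega_\iota(G)$ (reading the statement's $\omega_\alpha(G)$ as $\omega_\iota(G)$, as you do). The difference is in the first sentence. The paper's own proof never addresses it at all, whereas you derive it from Theorem~\ref{thm3} applied with $\alpha=\id$, using that squaring is a bijection of the odd-order group $H$ to solve $h^2=g^{-2}$ inside $H$; that computation is correct and does verify the criterion of Theorem~\ref{thm3}, including the check that $hg\neq e$ so that $hg$ really lies in $\Omega_{\id}(G)$. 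The caveat you should make explicit is that the paper declares at the outset that $\alpha$ is always an involution of $\Aut(G)$, precisely to exclude the Cayley case $\alpha=\id$, and Theorem~\ref{thm3} is stated under that hypothesis; so your application is formally outside the paper's framework, even though its proof goes through verbatim for $\alpha=\id$, where the condition reduces to the known criterion for subgroup perfect codes in Cayley graphs of abelian groups. Your $\mathbb{Z}_9$ example in fact shows that the first sentence is \emph{false} if ``subgroup perfect code of $G$'' is read inside the paper's framework: $H=\langle 3\rangle$ is not a perfect code of any generalized Cayley graph induced by the unique involution $\iota$, since $\omega_\iota(\mathbb{Z}_9)=\mathbb{Z}_9\neq H$. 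So what you have surfaced is a defect of the statement (and an omission in the paper's proof) rather than a gap in your argument: the first sentence can only be meant in the ordinary Cayley-graph sense, which is the sense in which you prove it.
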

\begin{proof}
For the involutory automorphism $\iota$, $\omega_\iota(G)=\{g^2\mid g\in G\}$.
Since the order of $H$ is odd, we have $H=\{h^2\mid h\in H\}$ and then $H$ is contained in $\omega_\iota(G)$.
By Proposition \ref{pro2}, it follows that $H=\omega_\alpha(G)$.
\end{proof}

\begin{proposition}\label{pro5}
Let $\alpha$ be an involutory automorphism of finite group $G$ and $H$ a subgroup perfect code of generalized Cayley graph induced by $\alpha$.
If $H\leq K\leq G$ and $\alpha(K)=K$, then $H$ is a subgroup perfect code of $K$ with respect  to  $\alpha$.
\end{proposition}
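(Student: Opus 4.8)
The plan is to reduce everything to the clean coset-representative characterization provided by Corollary~\ref{cor1} and then take the obvious candidate set $S' = S \cap K$. Since $H$ is a subgroup perfect code of $GC(G, S, \alpha)$, Corollary~\ref{cor1} tells us that $\alpha(H) = H$ and that $\{e\} \cup S$ is a set of coset representatives of $H$ in $G$. Because $\alpha(K) = K$, the restriction $\alpha|_K$ is again an involutory automorphism of $K$, so it makes sense to ask whether $H$ is a perfect code of some $GC(K, S', \alpha)$; by Corollary~\ref{cor1} applied inside $K$, it suffices to exhibit an $S'$ such that $S'$ is a generalized Cayley subset of $K$ induced by $\alpha|_K$ and $\{e\} \cup S'$ is a set of coset representatives of $H$ in $K$.

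First I would verify that $S' = S \cap K$ is a generalized Cayley subset of $K$ induced by $\alpha|_K$, which amounts to three checks. The containment $S' \subseteq K$ is immediate. For the symmetry condition, I would use $\alpha(K) = K$ and $\alpha(S) = S^{-1}$ together with the fact that $K$ is a subgroup (so $K^{-1} = K$) to compute $\alpha(S') = \alpha(S) \cap \alpha(K) = S^{-1} \cap K = (S \cap K)^{-1} = (S')^{-1}$. For the condition $S' \cap \omega_{\alpha|_K}(K) = \emptyset$, I would observe that $\omega_{\alpha|_K}(K) = \{\alpha(k^{-1})k \mid k \in K\} \subseteq \omega_\alpha(G)$, so $S' \cap \omega_{\alpha|_K}(K) \subseteq S \cap \omega_\alpha(G) = \emptyset$, the last equality holding because $S$ is a generalized Cayley subset of $G$.

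The main step is showing that $\{e\} \cup S'$ is a complete and irredundant set of coset representatives of $H$ in $K$. Here I would use the inclusion $H \leq K \leq G$: the cosets of $H$ lying in $K$ are exactly the cosets of $H$ in $K$. Given any coset $Hk$ with $k \in K$, it has a unique representative $x \in \{e\} \cup S$ with $Hx = Hk$; since $x \in Hk \subseteq K$, this representative automatically lies in $K$, hence in $(\{e\} \cup S) \cap K = \{e\} \cup S'$. Conversely, distinct elements of $\{e\} \cup S'$ remain in distinct $H$-cosets because they already did so in $G$. This gives that $\{e\} \cup S'$ meets every $H$-coset in $K$ exactly once.

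The crux I expect to require the most care is this last point, namely that the representative of each $H$-coset contained in $K$ genuinely falls inside $K$ rather than being lost when we intersect $S$ with $K$; everything else is routine bookkeeping. Once both conditions are in place, Corollary~\ref{cor1} applied to $(K, S', \alpha|_K)$ yields that $H$ is a subgroup perfect code of $GC(K, S', \alpha)$, completing the proof.
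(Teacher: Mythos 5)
Your proposal is correct and follows essentially the same route as the paper: both take $S' = S\cap K$, check it is a generalized Cayley subset of $K$ induced by $\alpha$, observe that $\{e\}\cup S'$ is a set of coset representatives of $H$ in $K$, and conclude via Corollary~\ref{cor1}. You supply more detail than the paper does on the coset-representative step (using $Hk\subseteq K$ to see that no representative is lost), which the paper merely asserts.
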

\begin{proof}
Assume that $H$ a subgroup perfect code of graph $GC(G, S, \alpha)$.
Then $\{e\}\cup S$ is a set of coset representatives of $H$.
Let $S_1=S\cap K$.
Then $\alpha(S_1^{-1})=\alpha(S^{-1})\cap \alpha(K^{-1})=S\cap K=S_1$, which implies that $S_1$ is  a generalized Cayley subset of $K$ induced by $\alpha$.
Combining the fact that  $\{e\}\cap S_1$ is a set of coset representatives of $H$ in $K$, by Corollary \ref{cor1}, the result follows.
\end{proof}

Observe that  if $\alpha(H)=H$, then for  any $g\in N_G(H)$, $\alpha(g)H=\alpha(gH)=\alpha(Hg)=H\alpha(g)$, which implies $\alpha(N_G(H))=N_G(H)$.
In Proposition \ref{pro5}, let $K=N_G(H)$, then we have the following corollary.

\begin{corollary}
Let $\alpha$ be an involutory automorphism of finite group $G$ and $H$ a subgroup perfect code of generalized Cayley graph induced by $\alpha$.
Then $H$ is subgroup perfect code of $N_G(H)$ with respect  to  $\alpha$.
\end{corollary}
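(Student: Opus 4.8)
The plan is to obtain this as an immediate consequence of Proposition~\ref{pro5} by taking $K = N_G(H)$, so the whole task reduces to checking that the three hypotheses of that proposition are met: that $H$ is a subgroup perfect code of a generalized Cayley graph induced by $\alpha$ (given), that $H \leq N_G(H) \leq G$, and that $N_G(H)$ is $\alpha$-invariant.

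First I would record that, since $H$ is a subgroup perfect code of $GC(G, S, \alpha)$ for some generalized Cayley subset $S$, Proposition~\ref{pro1} yields $\alpha(H) = H$. The containment $H \leq N_G(H) \leq G$ is immediate from the definition of the normalizer, so the only substantive point is the $\alpha$-invariance of $N_G(H)$. For this I would reuse the observation stated just before the corollary: for any $g \in N_G(H)$, using $\alpha(H) = H$ one computes $\alpha(g)H = \alpha(gH) = \alpha(Hg) = H\alpha(g)$, whence $\alpha(g) \in N_G(H)$. Thus $\alpha(N_G(H)) \subseteq N_G(H)$, and applying the same argument to $\alpha^{-1} = \alpha$ (or simply noting $\alpha$ is an involutory bijection of a finite set) gives the reverse inclusion, so $\alpha(N_G(H)) = N_G(H)$.

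With $K = N_G(H)$ now verified to satisfy $H \leq K \leq G$ and $\alpha(K) = K$, Proposition~\ref{pro5} applies verbatim and delivers the conclusion that $H$ is a subgroup perfect code of $N_G(H)$ with respect to $\alpha$. I do not expect any genuine obstacle here: the statement is a specialization of Proposition~\ref{pro5}, and the single nontrivial verification — the $\alpha$-invariance of the normalizer — rests entirely on the already-established equality $\alpha(H) = H$. The only point requiring any care is to make sure that the equality $\alpha(N_G(H)) = N_G(H)$ (rather than mere containment) is justified, which is why I would explicitly invoke that $\alpha$ is an involution acting bijectively on the finite group $G$.
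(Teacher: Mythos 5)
Your proposal is correct and follows exactly the paper's route: the paper likewise derives $\alpha(N_G(H))=N_G(H)$ from $\alpha(H)=H$ via the computation $\alpha(g)H=\alpha(gH)=\alpha(Hg)=H\alpha(g)$ and then applies Proposition~\ref{pro5} with $K=N_G(H)$. Your extra care in upgrading the containment $\alpha(N_G(H))\subseteq N_G(H)$ to an equality is a minor refinement of the same argument.
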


\section{Total perfect code}

From the discussion of perfect code in Section 2, we can easily obtain the following result about total perfect code in generalized Cayley graphs.

\begin{proposition}
Let $S=\{s_1, \cdots, s_r\}$ be a generalized Cayley subset of $G$ induced by the involutory automorphism $\alpha$ and $X$ a subset of $G$.
The following conditions are equivalent:
\begin{enumerate}[{\rm(i)}]

\item $X$ is a total perfect code of $GC(G, S, \alpha)$;

\item $\{\alpha(X)s_1, \cdots, \alpha(X)s_r\}$ is a partition of $G$;

\item $|G|=|X|r$ and $(\alpha(X^{-1})\alpha(X))\cap(SS^{-1})=\emptyset$.

\end{enumerate}
\end{proposition}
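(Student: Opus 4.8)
The plan is to run the same scheme as in the proof of Lemma \ref{lem4}, the only structural difference being that a total perfect code requires \emph{every} vertex of $GC(G,S,\alpha)$ — including the vertices lying in $X$ — to have exactly one neighbour in $X$. Consequently $X$ is not itself one of the blocks of the partition, the independence condition $\alpha(X^{-1})X\cap S=\emptyset$ appearing in Lemma \ref{lem4} is dropped, and we are left with $r$ blocks in place of $r+1$.

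For (i)$\Leftrightarrow$(ii) I would first record the adjacency description: a vertex $g$ is adjacent to $x\in X$ exactly when $\alpha(x^{-1})g\in S$, i.e. when $g=\alpha(x)s$ for a (necessarily unique) $s\in S$. Since $g=\alpha(x_1)s_i=\alpha(x_2)s_i$ forces $x_1=x_2$, distinct neighbours of $g$ correspond to distinct blocks $\alpha(X)s_i$ containing $g$, and conversely; hence the neighbours of $g$ in $X$ are in bijection with the blocks $\alpha(X)s_i$ that contain $g$. Thus ``every $g\in G$ has exactly one neighbour in $X$'' is precisely ``every $g\in G$ lies in exactly one block''. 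Packaging the two halves separately, the argument of Lemma \ref{lem2} applied to all of $G$ (rather than to $G\setminus X$) shows that every vertex has \emph{at least} one neighbour in $X$ iff $G=\bigcup_{s\in S}\alpha(X)s$, while Lemma \ref{lem1} shows that every vertex has \emph{at most} one neighbour in $X$ iff the blocks are pairwise disjoint; together these say exactly that $\{\alpha(X)s_1,\dots,\alpha(X)s_r\}$ partitions $G$.

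For (ii)$\Leftrightarrow$(iii) I would split the partition condition into pairwise disjointness of the blocks together with their covering of $G$. Each block $\alpha(X)s_i$ has cardinality $|X|$, since right translation by $s_i$ and the bijection $\alpha$ are both injective; hence, once disjointness is known, covering $G$ is equivalent to the cardinality count $r|X|=|G|$. By the equivalence (ii)$\Leftrightarrow$(iii) of Lemma \ref{lem1}, pairwise disjointness of the blocks is equivalent to $(\alpha(X^{-1})\alpha(X))\cap(SS^{-1})=\{e\}$, which is the intersection condition of (iii).

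There is no genuinely hard step here: this proposition is the total-perfect-code shadow of Lemma \ref{lem4} and inherits its proof almost verbatim. The only point deserving care is the bookkeeping that separates the total case from the ordinary one, namely that the vertices of $X$ are on exactly the same footing as all other vertices, so that $X$ disappears as a block and independence is not imposed (here one also uses that $GC(G,S,\alpha)$ is loopless, as $\alpha(g^{-1})g\in\omega_\alpha(G)$ and $S\cap\omega_\alpha(G)=\emptyset$, so that a vertex of $X$ genuinely needs a \emph{distinct} neighbour in $X$, consistent with the matching structure). I would also flag that, exactly as in Lemma \ref{lem1}, the identity $e$ always lies in both $\alpha(X^{-1})\alpha(X)$ and $SS^{-1}$, so the intersection in (iii) is $\{e\}$ rather than literally empty; the two readings coincide once this trivial overlap is discounted.
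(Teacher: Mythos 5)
Your proof is correct and follows exactly the route the paper intends: the paper gives no explicit proof for this proposition, stating only that it follows from the Section 2 discussion, and your argument is precisely the adaptation of Lemmas \ref{lem1}, \ref{lem2} and \ref{lem4} with the block $X$ and the independence condition removed. You are also right to flag that condition (iii) should read $(\alpha(X^{-1})\alpha(X))\cap(SS^{-1})=\{e\}$ rather than $\emptyset$ (the identity always lies in both sets when $X$ and $S$ are nonempty); this imprecision is inherited from the statement of Lemma \ref{lem4}(iii) in the paper itself.
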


If $H$ is a subgroup of $G$, then $H$ is total perfect code of the generalized Cayley graph $GC(G, S, \alpha)$ if and only if $S$ is right transversal of $\alpha(H)$ in $G$.
Note that
\begin{eqnarray*}
G
&=&\alpha(H)s_1\cup \alpha(H)s_2\cup \cdots \cup \alpha(H)s_r\\
&=&\alpha(Hs_1^{-1})\cup \alpha(Hs_2^{-1})\cup \cdots \cup \alpha(Hs_r^{-1})\\
&=&\alpha(s_1)\alpha(H)\cup \alpha(s_2)\alpha(H)\cup \cdots \cup \alpha(s_r)\alpha(H)\\
&=&s_1^{-1}\alpha(H)\cup s_2^{-1}\alpha(H)\cup \cdots \cup s_r^{-1}\alpha(H),
\end{eqnarray*}
thus the generalized Cayley subset $S$ is the right coset transversal of $\alpha(H)$ if and only if $S^{-1}$ is the left coset transversal of $\alpha(H)$.

It can be checked that  Propositions \ref{pro2.10} to \ref{pro2.12} also hold for subgroup total perfect  code.
Unlike subgroup perfect code,  the subgroup $H$ of $G$ is  total perfect code of $GC(G, S, \alpha)$ does not imply $\alpha(H)=H$.

\frenchspacing

\end{sloppypar}

\end{document}